\documentclass[10pt]{article}

\usepackage[a4paper, top=2cm, bottom=3cm, left=2.5cm, right=2.5cm]{geometry}
\usepackage{mathtools}
\usepackage{amssymb}
\usepackage{mathrsfs}
\usepackage{amsthm}
\usepackage[shortlabels]{enumitem}

\usepackage{cite}
\usepackage{hyperref}

\newtheorem{theorem}{Theorem}[section]
\newtheorem{corollary}[theorem]{Corollary}
\newtheorem{proposition}[theorem]{Proposition}
\newtheorem{lemma}[theorem]{Lemma}
\theoremstyle{definition}

\theoremstyle{remark}
\newtheorem{remark}{Remark}[section]

\newcommand{\abs}[1]{\left\lvert#1\right\rvert}
\newcommand{\norm}[1]{\left\lVert#1\right\rVert}
\newcommand{\scp}[2]{\left\langle#1,#2\right\rangle}

\newcommand{\dd}{\;\mathrm{d}}

\newcommand{\R}{\mathbb{R}}

\DeclareMathOperator{\divergence}{div}

\DeclareMathOperator{\identity}{Id}

\DeclareMathOperator{\distance}{dist}

\newcounter{saveenumi}

\begin{document}
\reversemarginpar

\title{\Large\bfseries
The Stokes Operator with Power Weights Outside the Muckenhoupt Class
}

\author{
Erik S. Heidrich\thanks{Department of Mathematics, RPTU Kaiserslautern-Landau.
Email address: \texttt{heidr@rptu.de}}
}

\date{}

\maketitle

\begin{abstract}
In this paper, we prove estimates for the Stokes resolvent problem with no-slip boundary conditions on the half space in weighted $L^p$-spaces. The weights we consider are power weights both inside and outside the Muckenhoupt range. Our estimates imply that the corresponding Stokes operator is the generator of a bounded analytic $C_0$-semigroup. We furthermore show that it admits a bounded $H^\infty$-calculus of angle $0.$ This results seems to be new even within the Muckenhoupt range.
\end{abstract}

\medskip

\noindent\textbf{2020 Mathematics Subject Classification.}
Primary 76D07; Secondary 35Q35, 46E35, 47A60.

\smallskip

\noindent\textbf{Key words and phrases.}
Stokes Operator, Resolvent Estimates in Weighted $L^p$-spaces, Maximal Regularity, Bounded $H^\infty$-calculus

\section{Introduction}
On the half space $H \coloneqq \{x=(x_1,\dots,x_{n+1})^\mathrm{T}\in\R^{n+1},\,x_{n+1}>0\}\subset \R^{n+1},\, n\geq 1,$ we consider the Stokes equation

\begin{equation}\label{eq:StokesEquation}
\begin{aligned}
    \partial_t u - \Delta u + \nabla p &= f \quad \text{in } H\\
    \divergence u &= 0 \quad \text{in } H\\
    u|_{\partial H} &= 0
\end{aligned}
\end{equation}

in weighted $L^p$-spaces $L^p_w(H)^{n+1} \coloneqq L^p(H; w\,\mathrm{d}x)^{n+1},$ where $1 < p < \infty$ and $w = w_\gamma$ is of the form
\begin{equation}\label{eq:powerWeight}
w_\gamma(x) \coloneqq \distance(x,\partial H)^\gamma =  x_{n+1}^\gamma,\quad x\in H,
\end{equation}
for $\gamma \in (-p-1,2p-1)\setminus\{-1, p-1\}.$ In this case we also write $L^p_\gamma(H)^{n+1} \coloneqq L^p_{w_\gamma}(H)^{n+1}.$

Considering partial differential equations in spaces with weights has found many applications, some of which have been known for a long time; see e.g. \cite{LionsMagenes1968Vol1} or the introduction in \cite{Kufner1985}. Somewhat more recently, weighted spaces have proven to be a key tool in the theory of boundary value problems for stochastic partial differential equations; see \cite{Krylov1994, Krylov1999HalfSpace, Kim2014} just to mention a few examples; for an overview see also \cite{Krylov2008Lp, Krylov2008Sobolev} and the references therein. This is due to the fact that the roughness of the noise in stochastic PDEs forces additional compatibility conditions on the boundary terms for a solution to exist in unweighted $L^p$-spaces. This was observed by Flandoli in \cite{Flandoli1990}; see also \cite{Krylov1994} for two simple examples of this circumstance. To overcome this, weights of the form \eqref{eq:powerWeight} can allow for (and give a precise control over) blow-up of the solution and its derivatives at the boundary.

In respective applications, the range $p - 1 < \gamma < 2p - 1$ has turned out to be of special interest, see e.g. \cite[Theorem 0.1]{Krylov2001}. This poses a challenge to the analysis of \eqref{eq:StokesEquation}. It is well-known that the class of so-called \emph{Muckenhoupt weights}, or $A_p$-weights for short, is precisely the class of weights in which one has a ``nice'' harmonic analysis for $L^p(w\, \mathrm{d}x)$ at hand, in the sense that $w \in A_p$ is \emph{equivalent} to many desirable properties of the space $L^p(w\, \mathrm{d}x)$ such as boundedness on $L^p(w\, \mathrm{d}x)$ of the Hardy-Littlewood maximal operator, the Neumann Riesz transforms, validity of Mikhlin-type multiplier theorems, and more \cite{GCRdF85, Grafakos2004}. The weight $w_\gamma$ as defined in \eqref{eq:powerWeight} belongs to $A_p$ if and only if
$$-1 < \gamma < p - 1.$$
This poses a substantial restriction for any approach to a theory for \eqref{eq:StokesEquation} in $L^p(w_\gamma\, \mathrm{d}x)$.

To illustrate this, in the classical theory of the Stokes equation, a central role is played by the \emph{Helmholtz decomposition}
\begin{equation}\label{eq:HelmholtzDecomposition}
L^p(\Omega)^{n+1} = L^p_\sigma(\Omega) \; \oplus \; \nabla L^p(\Omega)
\end{equation}
and the associated continuous projection 
$$\mathbb{P}_p \, \colon \, L^p(\Omega)^{n+1} \to L^p_\sigma(\Omega),$$
which are valid under a wide range of combinations of assumptions on $p$ and $\Omega$; see \cite{Galdi1994I} for a rather self-contained exposition or the references in \cite{FarwigKozonoSohr07Helmholtz}. Here
$$C^\infty_{0,\sigma}(H)\coloneqq \{\varphi \in C^\infty_0(\Omega)^{n+1} \,\colon\, \divergence \varphi = 0\},\quad L^p_\sigma(\Omega) \coloneqq \overline{C^\infty_{0,\sigma}(H)}^{\norm{\cdot}_{L^p(\Omega)^{n+1}}}$$
and
$$\nabla L^p(\Omega) \coloneqq \{\nabla p \, \colon \, p \in L^p(\Omega)\}.$$
In \cite{Froehlich00} it was shown that this decomposition is also valid for $L^p_w(\Omega)^{n+1},\, p\in(1,\infty),$ when $\Omega$ is $\R^n,\, H$ or a bounded or exterior $C^1$-domain and, crucially, $w \in A_p.$ It is not known whether \eqref{eq:HelmholtzDecomposition} holds for $L^p_\gamma(H)^{n+1}$ when $p - 1 < \gamma < 2p - 1.$ Given the fact that, interpreted in a suitable way \cite{GigaGu2020},
$\mathbb{P}_p$ admits the representation
$$\mathbb{P}_p = \identity - \nabla \Delta^{-1}\divergence,$$
this seems questionable in view of the unboundedness of the Riesz transforms $R_i=\partial_i \Delta^{-1/2},\, i=1,\dots,{n+1}.$ However, there is also a positive result on the second-order Riesz transforms $R_i R_j$ in higher-order Sobolev spaces on $L^p_\gamma(H)^{n+1}$, see \cite[Remark 8.5]{LLRV2025}. It would be interesting to know whether there are instances where \eqref{eq:HelmholtzDecomposition} can hold outside the $A_p$-range.

In order to circumvent this issue, we consider the resolvent problem
\begin{equation}\label{eq:resolventProblem}
\begin{aligned}
    \lambda u - \Delta u + \nabla p &= f \quad \text{in } H\\
    \divergence u &= 0 \quad \text{in } H\\
    u|_{\partial H} &= 0
\end{aligned}
\end{equation}
associated to \eqref{eq:StokesEquation}. Here, $\lambda\in\mathbb{C}$ is from some sector
$$\Sigma_\theta\coloneqq\{z=re^{i\eta}\in\mathbb{C}\,\colon\,r>0,\,\eta<\theta\},\;\theta \in (0,\pi)$$ and $f \in L^p_{w_\gamma}(H)^{n+1}.$

For smooth $f$ an explicit solution representation $u=u_\lambda$ to \eqref{eq:resolventProblem} has been constructed in \cite{DHP01}.
We give a precise statement of this result in Section \ref{sec:pointwiseEstimates}. 

Our first main result then concerns a norm estimate regarding the solution map
\begin{equation}\label{eq:solutionMap}
f \mapsto u_\lambda, \quad \lambda \in \Sigma_\theta.
\end{equation}

\begin{theorem}[Resolvent estimate for the Stokes operator]\label{th:sectoriality}
Let $1 < p < \infty,\,\lambda \in \Sigma_\theta$ for some $\theta \in (0,\pi)$ and
$$-p-1 < \gamma < 2p-1.$$
Then for all $f \in L^p_\sigma(H) \cap L^p_\gamma(H)^{n+1}$
it holds
\begin{equation}\label{eq:resolventEstimate}
\norm{u_\lambda}_{L^p_\gamma(H)^{n+1}} \lesssim \frac{1}{\lambda} \norm{f}_{L^p_\gamma(H)^{n+1}}
\end{equation}
where $u_\lambda$ as defined in Section \ref{sec:pointwiseEstimates} is the (unique) solution to \eqref{eq:resolventProblem}.
\end{theorem}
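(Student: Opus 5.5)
We plan to prove \eqref{eq:resolventEstimate} by a kernel argument. The main tool will be pointwise bounds for the solution formula of \cite{DHP01}, which we will establish in Section \ref{sec:pointwiseEstimates}. That formula writes $u_\lambda$ as the whole-space resolvent $(\lambda-\Delta)^{-1}$ applied to the odd or even reflection of $f$, plus a boundary correction. Since $f$ is solenoidal, the pressure part of the whole-space problem drops out; this is why we restrict to $f\in L^p_\sigma(H)$. The boundary correction involves the Dirichlet and Neumann type operators in the tangential variables, built from $e^{-\sqrt{\lambda+|\xi|^2}\,x_{n+1}}$ and $e^{-|\xi|x_{n+1}}$.

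The first step is to write $u_\lambda(x)=\int_H k_\lambda(x,y)f(y)\dd y$. We will then show a pointwise bound of the schematic form
\begin{equation*}
|k_\lambda(x,y)|\lesssim \frac{1}{|\lambda|}\,\frac{|\lambda|^{\frac{n+1}{2}}}{(1+|\lambda|^{1/2}|x-y^*|)^{N}}\cdot\Bigl(\frac{\min(x_{n+1},y_{n+1})}{|x-y^*|}\Bigr)^{\alpha}\,+\,\text{(whole-space part)},
\end{equation*}
where $y^*$ is the reflection of $y$ and $\alpha$ is a suitable power. The whole-space part is the Gaussian-type resolvent kernel, which decays like $|\lambda|^{-1}|\lambda|^{\frac{n+1}{2}}(1+|\lambda|^{1/2}|x-y|)^{-N}$. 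The key feature of the boundary part is the extra boundary decay in $x_{n+1}$ and $y_{n+1}$, which comes from the no-slip condition. We intend to extract it by Taylor expanding the exponentials in $x_{n+1}$ and $y_{n+1}$, using $u|_{\partial H}=0$ and, for $f$, the vanishing normal trace $f_{n+1}|_{\partial H}=0$ that holds for $f\in L^p_\sigma$. Scaling $x\mapsto|\lambda|^{1/2}x$ will reduce everything to $|\lambda|=1$, since the weight $w_\gamma$ is homogeneous.

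Second, we will establish weighted boundedness of these kernels. For the whole-space Gaussian part with $\gamma$ outside the $A_p$ range, we split into the near-diagonal region $|x-y|<y_{n+1}/2$, where $x_{n+1}\sim y_{n+1}$ so the weight is harmless and Young's inequality applies, and the off-diagonal region. In the off-diagonal region we will apply a weighted Schur test with test functions $x_{n+1}^{\beta}$. The boundary kernels are dominated by positive kernels of the form
\begin{equation*}
\frac{x_{n+1}^{a}\,y_{n+1}^{b}}{(|x'-y'|+x_{n+1}+y_{n+1})^{n+1+a+b}}.
\end{equation*}
After integrating in the tangential variables, the question reduces to one-dimensional Hardy-type operators in $x_{n+1}$. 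The classical criterion for such kernels on $L^p(\R_+,t^{\gamma}\dd t)$ is
\begin{equation*}
-1-ap<\gamma<p-1+bp.
\end{equation*}
Hence $a\ge1$ yields the lower endpoint $-p-1$, and $b\ge1$ yields the upper endpoint $2p-1$.

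The main obstacle is obtaining the sharp vanishing orders $a=b=1$, uniformly in $\lambda\in\Sigma_\theta$, for every piece of the solution formula. This includes the whole-space piece, whose kernel on its own does not vanish at the boundary and so must be combined with the reflected piece. The natural pairing is the odd reflection for the tangential components and the even reflection for the normal component. Gaining the factor $y_{n+1}$ will use $\divergence f=0$ together with an integration by parts that moves one derivative onto the kernel. We will first attempt this for the normal component, and then handle the tangential components through the Dirichlet-to-Neumann structure of the correction term, checking that the symbols, after extracting these factors, still obey Mikhlin-type bounds with the homogeneity needed for the pointwise estimate. Finally, we will obtain the estimate for $f\in C^\infty_{0,\sigma}(H)$ and extend it by density to $L^p_\sigma(H)\cap L^p_\gamma(H)^{n+1}$.
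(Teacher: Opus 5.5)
\textbf{There is a genuine gap, and it sits at the decisive point of your plan.} You want source-side vanishing $b=1$, so that one Schur/Hardy test covers all of $(-1-p,2p-1)$. The paper does not attempt this. It uses only the target-side factor $y$ from Proposition 3.1, which is your $a=1$, $b=0$. It then applies tangential Young and a weighted H\"older bound, gets $-1-p<\gamma<p-1$ from $I_1,I_2,I_3$, and invokes duality for the rest.

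The step you flag as the main obstacle is where the plan breaks, and not for technical reasons.

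\emph{The boundary correction does not decay rapidly.} The factor $e^{-\abs{\xi}y}$ in $\hat r_v$ is the harmonic pressure mode. At $\abs{\lambda}=1$, Proposition 3.1 only gives $\abs{r_v}\lesssim y e^{-c\tilde y}(1+y)^{-1}(\abs{x}+y+\tilde y)^{-n-1}$. So a factor $(1+\abs{\lambda}^{1/2}\abs{x-y^*})^{-N}$ cannot hold, and $\norm{r_v(\cdot,y,\tilde y,\lambda)}_1$ is of order $(\abs{\lambda}y)^{-1}$ when $\tilde y\lesssim\abs{\lambda}^{-1/2}\ll y$.

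\emph{Your integration by parts produces no factor $\tilde y$.} Insert $\xi\cdot\hat f_v=i\partial_{\tilde y}\hat f_w$ into $\hat r_v$ and integrate by parts, using $f_w|_{\tilde y=0}=0$. This gives exactly
\[
\hat v_2(\xi,y)=i\,\frac{\xi}{\abs{\xi}}\;\frac{e^{-\abs{\xi}y}-e^{-\omega y}}{\omega-\abs{\xi}}\int_0^\infty e^{-\omega\tilde y}\,\hat f_w(\xi,\tilde y)\dd\tilde y ,
\]
which does not vanish at $\tilde y=0$. More generally, take any admissible modification $K\mapsto K-\nabla_{(\tilde x,\tilde y)}\Phi$ that vanishes at $\tilde y=0$. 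The coefficient of $\tilde y$ in its tangential part is $(\partial_{\tilde y}K_{\mathrm{tan}}-\nabla_{\tilde x}K_{\mathrm{nor}})|_{\tilde y=0}$, which does not depend on $\Phi$. For $v_2$ this coefficient has symbol $-\frac{e^{-\abs{\xi}y}-e^{-\omega y}}{\omega-\abs{\xi}}\frac{\xi\xi^T}{\abs{\xi}}$. So a piece with tangential $L^1$-norm $\eqsim\abs{\lambda}^{-1/2}\tilde y/y$ always survives. In your own Hardy criterion, this $1/y$ tail forces $\gamma<p-1$.

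\textbf{The loss is real: the estimate fails for every $\gamma>p-1$.} Take $\lambda=1$ and choose:
\begin{itemize}
\item $h\in C_c^\infty((1,4))$ with $h\geq0$, $h\not\equiv0$;
\item $\psi\in\mathcal{S}(\R^n)^n$ with $\divergence\psi\not\equiv0$ and $\operatorname{supp}\hat\psi\subset\{\abs{\eta}\le1\}$;
\item $f_v=-h'(y)\psi(x/L)$ and $f_w=h(y)\divergence_x[\psi(x/L)]$.
\end{itemize}
Then $\divergence f=0$, $f$ vanishes near $\partial H$, and $\norm{f}_{L^p_\gamma}^p\eqsim L^n$.

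The Dirichlet part of $v$ is exponentially small for $y\ge10$. Meanwhile the formula above gives, for $L/10\le y\le L/2$ and $L\to\infty$, $v_2(x,y)\approx L^{-1}g_{y/L}(x/L)$. Here $\hat g_s(\eta)=-H\,\eta\,(\eta\cdot\hat\psi(\eta))\abs{\eta}^{-1}e^{-s\abs{\eta}}$ and $H=\int_0^\infty e^{-t}h(t)\dd t>0$. Hence
\[
\norm{u_1}_{L^p_\gamma}^p\gtrsim L^{n-p}\int_{L/10}^{L/2}y^\gamma\dd y\eqsim L^{n-p+\gamma+1},
\]
so $\norm{u_1}_{L^p_\gamma}/\norm{f}_{L^p_\gamma}\gtrsim L^{(\gamma+1-p)/p}$, which is unbounded for $\gamma>p-1$. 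So no kernel representation with $b=1$ exists.

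\textbf{What your scheme can deliver.} Target vanishing from no-slip with $b=0$ gives the range $(-1-p,p-1)$. There your Hardy criterion reproduces exactly the paper's conditions: $\gamma<p-1$ from the tail and the $\tilde y$-integrability in $I_1$, and $\gamma>-1-p$ from $I_2,I_3$.

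\textbf{The same example undercuts the paper's duality step for $(p-1,2p-1)$.} There is no Helmholtz projection on $L^{p'}_{\gamma'}$. Pairing with solenoidal test fields therefore controls $u_\lambda$ only modulo gradients, and the offending tail is asymptotically the potential flow $-\nabla p$.
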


The explicit solution formula lets us define the Stokes operator $\mathcal{A}_{p,\gamma}$ as the operator associated to the solution map \eqref{eq:solutionMap}, i.e. as the uniquely defined linear operator which has its resolvent given through \eqref{eq:solutionMap}. We refer to Section \ref{sec:StokesOperator} below for the details of this construction.

Theorem \ref{th:sectoriality} then immediately translates to the identification of the Stokes operator $\mathcal{A}_{p,\gamma}$ as the generator of a bounded analytic $C_0$-semigroup on 
$$L^p_{\sigma,\gamma}(H) \coloneqq \overline{C^\infty_{0,\sigma}(H)}^{\norm{\cdot}_{L^p_\gamma(H)^{n+1}}}.$$

\begin{corollary}[Semigroup generation]
Let $1 < p < \infty$ and 
$$-p-1 < \gamma < 2p-1.$$
The operator $\mathcal{A}_{p,\gamma}$ as defined in Section \ref{sec:StokesOperator} is sectorial of angle $\omega_S(\mathcal{A}_{p,\gamma})=0.$ In particular, it is the generator of a bounded analytic strongly continuous semigroup on $L^p_{\sigma,\gamma}(H)$.
\end{corollary}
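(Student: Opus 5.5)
The plan is to apply the standard abstract route from a uniformly bounded pseudo-resolvent to a sectorial operator, with Theorem \ref{th:sectoriality} supplying the key bound. Throughout, write $R(\lambda)f \coloneqq u_\lambda$ for $f \in C^\infty_{0,\sigma}(H)$ and $\lambda \in \Sigma_\theta$. Since $C^\infty_{0,\sigma}(H) \subset L^p_\sigma(H) \cap L^p_\gamma(H)^{n+1}$, estimate \eqref{eq:resolventEstimate} gives $\norm{R(\lambda)f}_{L^p_\gamma} \le C_\theta \abs{\lambda}^{-1}\norm{f}_{L^p_\gamma}$. By density, $R(\lambda)$ then extends uniquely to a bounded operator on $L^p_{\sigma,\gamma}(H)$ with the same bound.

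The first step is to check that $R(\lambda)$ maps into $L^p_{\sigma,\gamma}(H)$. For this I would show that $u_\lambda$ is divergence free, vanishes on $\partial H$ and has the decay visible in the formula of Section \ref{sec:pointwiseEstimates}. Then standard cutoff and Bogovskiĭ-type corrections give approximation by $C^\infty_{0,\sigma}(H)$ in $L^p_\gamma$, or, more simply, this is already built into the construction of Section \ref{sec:StokesOperator}.

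The second step is the resolvent identity
\[
R(\lambda)-R(\mu)=(\mu-\lambda)R(\lambda)R(\mu).
\]
On smooth data this follows from uniqueness of solutions of \eqref{eq:resolventProblem}: $R(\mu)f$ is again admissible data, and $R(\lambda)f-R(\mu)f$ and $(\mu-\lambda)R(\lambda)R(\mu)f$ solve the same problem. The identity then extends by continuity to all of $L^p_{\sigma,\gamma}(H)$.

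The third step is injectivity. If $R(\lambda)f=0$, the resolvent identity gives $R(\mu)f=0$ for all $\mu$, and this contradicts the density statement below unless $f=0$. Hence $\{R(\lambda)\}$ is the resolvent of a unique closed operator $-\mathcal{A}_{p,\gamma}$, with domain equal to the range of $R(\lambda)$.

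For density of the domain, and to obtain $\lambda R(\lambda)f\to f$, I would use the following computation for $f\in C^\infty_{0,\sigma}(H)$. Put $w\coloneqq R(\lambda)\Delta f$; note that $\Delta f \in C^\infty_{0,\sigma}(H)$. A direct check shows that $(f+w)/\lambda$ solves \eqref{eq:resolventProblem} with datum $f$. By uniqueness,
\[
\lambda R(\lambda)f - f = R(\lambda)\Delta f,
\qquad\text{so}\qquad
\norm{\lambda R(\lambda)f - f}_{L^p_\gamma}\lesssim \abs{\lambda}^{-1}\norm{\Delta f}_{L^p_\gamma}\to 0
\]
as $\abs{\lambda}\to\infty$ in $\Sigma_\theta$. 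Uniform boundedness of $\lambda R(\lambda)$ extends this convergence to all $f\in L^p_{\sigma,\gamma}(H)$. Consequently the domain is dense, and the operator has dense range and is injective; the latter also settles the injectivity in the third step. Since $\theta\in(0,\pi)$ was arbitrary, $\sup_{\lambda\in\Sigma_\theta}\norm{\lambda R(\lambda)}<\infty$ for every $\theta<\pi$, which is exactly sectoriality with $\omega_S(\mathcal{A}_{p,\gamma})=0$. The generation statement then follows from the classical characterization: a densely defined sectorial operator of angle $<\pi/2$ generates a bounded analytic $C_0$-semigroup, with the sign convention $-\mathcal{A}_{p,\gamma}$ as generator.

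The main obstacle is not the abstract argument but the uniqueness used repeatedly above. One needs the solution class of \eqref{eq:resolventProblem} in which $u_\lambda$ is unique to be stable under the operations used: $R(\mu)f$ and $(f+w)/\lambda$ must be legitimate data and solutions, with pressures in a suitable class. I would first try to reduce this to the known unweighted uniqueness in $L^p(H)$. For $\gamma$ in the $A_p$-range, or for smooth compactly supported data, the solution $u_\lambda$ of \cite{DHP01} also lies in an unweighted $L^q$-space, so uniqueness there applies directly.
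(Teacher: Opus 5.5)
Your argument is correct and has the same skeleton as the paper's: extend the solution map by density using Theorem \ref{th:sectoriality}, check the pseudo-resolvent identity, prove injectivity and dense range, invoke the Engel--Nagel characterization, then read off sectoriality of angle $0$ and generation. The density step, however, is done differently.

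The paper gets $\lambda_n\mathrm{R}_\gamma(\lambda_n)f\rightharpoonup f$ for $f\in X_0=L^p_\sigma(H)\cap L^p_\gamma(H)^{n+1}$ only weakly. It uses reflexivity of $L^p_{\sigma,\gamma}(H)$, Mazur's lemma and a.e.\ limits inherited from the unweighted convergence in $L^p$. It then needs a second pass through the resolvent identity to obtain strong convergence and injectivity.

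Your identity $\lambda\mathrm{R}(\lambda)f-f=\mathrm{R}(\lambda)\Delta f$ on $C^\infty_{0,\sigma}(H)$ is just $\lambda R(\lambda,A_p)f-f=R(\lambda,A_p)A_pf$ for the unweighted Stokes operator $A_p$, since $C^\infty_{0,\sigma}(H)\subset\mathcal{D}(A_p)$ with $A_pf=\Delta f$. It gives norm convergence at rate $\abs{\lambda}^{-1}$ directly, uses no reflexivity, and yields dense domain and injectivity in one stroke. It is the cleaner route.

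In the other direction, the paper handles the uniqueness/consistency issue you single out as the main obstacle more economically. It takes $\mathrm{R}(\lambda)$ to be the unweighted resolvent restricted to $X_0$, the set on which Theorem \ref{th:sectoriality} is stated. The resolvent identity is then inherited by restriction, with no separate uniqueness argument. The same estimate on all of $X_0$ also shows that your extension from $C^\infty_{0,\sigma}(H)$ coincides with $\mathrm{R}(\lambda)$ on $X_0\cap L^p_{\sigma,\gamma}(H)$.

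Your first step, that the solutions lie in $L^p_{\sigma,\gamma}(H)$, is only sketched. A Bogovskii correction near $\partial H$, where the weight degenerates, would need real care. The paper, however, equally takes $X_0\subseteq L^p_{\sigma,\gamma}(H)$ for granted, so this is no deficit relative to it. Your sign convention, with $-\mathcal{A}_{p,\gamma}$ as generator rather than $\mathrm{R}_\gamma(\lambda)=(\lambda-\mathcal{A}_{p,\gamma})^{-1}$, is cosmetic.
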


In a second step, we ask for the much stronger property of a \emph{bounded $H^\infty$-calculus}. By $H^\infty(\Sigma_\theta)$ denote the space of all (scalar-valued) bounded holomorphic functions defined on $\Sigma_\theta$, equipped with the sup-norm. Then define
$$H_0^\infty(\Sigma_\theta) = \left\{ f \in H^\infty(\Sigma_\theta) \colon \exists \, s> 0 \colon \abs{f(\lambda)} \lesssim \Bigl (\frac{\abs{\lambda}}{(1+\abs{\lambda})^2} \Bigr )^s \; \forall \lambda \in \Sigma_\theta \right\} \subset H^\infty(\Sigma_\theta).$$

If $A$ is a sectorial operator of angle $\omega$, then for $\omega < \tau < \pi$ and $f \in H_0^\infty(\Sigma_\omega)$ set
$$\Phi_A(f) = f(A) = \frac{1}{2\pi \mathrm{i}} \int_{\partial \Sigma_\tau} f(\lambda) R(\lambda, A) \dd \lambda \quad \in \mathcal{L}(X).$$
We say that $A$ admits a \emph{bounded $H^\infty$-calculus} if
$$\norm{\Phi_A(f)}_{\mathcal{L}(X)} \lesssim \norm{f}_\infty$$
for all $f \in H_0^\infty(\Sigma_\omega)$. Our second main result reads as follows.

\begin{theorem}[Bounded $H^\infty$-calculus]\label{th:HinftyCalculus}
Let $1 < p < \infty$ and 
$$-p-1 < \gamma < 2p-1.$$
The operator $\mathcal{A}_{p,\gamma}$ as defined in section \ref{sec:StokesOperator} admits a bounded $H^\infty$-calculus of angle $\omega_H(\mathcal{A}_{p,\gamma}) = 0$ on $L^p_{\sigma,\gamma}(H)$.
\end{theorem}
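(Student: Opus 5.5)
We plan to prove Theorem~\ref{th:HinftyCalculus} by a kernel-based argument rather than by abstract perturbation, since outside the $A_p$-range neither the Helmholtz projection nor standard Calder\'on--Zygmund theory on $L^p_\gamma(H)$ is available. The starting point is the explicit representation of $u_\lambda = (\lambda+\mathcal{A}_{p,\gamma})^{-1}f$ from Section~\ref{sec:pointwiseEstimates}. We split it as $u_\lambda = \mathcal{R}_\lambda f + \mathcal{K}_\lambda f$, where $\mathcal{R}_\lambda$ is the Dirichlet heat resolvent on $H$ (odd reflection of the whole-space resolvent) and $\mathcal{K}_\lambda$ is the Stokes-specific boundary correction built from the pressure. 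For $\psi \in H_0^\infty(\Sigma_\tau)$ with $0<\tau<\pi$ we write
\begin{equation*}
\psi(\mathcal{A}_{p,\gamma}) f = \frac{1}{2\pi\mathrm{i}}\int_{\partial\Sigma_{\nu}} \psi(\lambda)\,\bigl(\mathcal{R}_{-\lambda} + \mathcal{K}_{-\lambda}\bigr) f \dd\lambda, \qquad 0<\nu<\tau,
\end{equation*}
and treat the two pieces separately.

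For the Dirichlet Laplacian part we would use that, on $L^p_\gamma(H)$ with $-p-1<\gamma<2p-1$, the Dirichlet Laplacian is known to have a bounded $H^\infty$-calculus of angle $0$. Alternatively, one can prove this directly: after a partial Fourier transform in $x' \in \R^n$, the problem reduces to a one-dimensional Dirichlet problem in $x_{n+1}$, and the weight acts only in that variable. The Dirichlet condition gives a vanishing factor $\min(1, y_{n+1}/x_{n+1})$ in the kernel, and this factor is exactly what extends the admissible range of $\gamma$ beyond $A_p$. The contour integral of this part therefore equals $\psi(-\Delta_D)f$, which is bounded by $\norm{\psi}_\infty$. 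For $f\in L^p_{\sigma,\gamma}(H)$ we also need to check that it maps back into the solenoidal space; this should follow by density of $C^\infty_{0,\sigma}(H)$ and the structure of the full formula.

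The main obstacle is the correction term $\mathcal{K}_\lambda$. Here we would use the pointwise kernel estimates of Section~\ref{sec:pointwiseEstimates}, which we expect to take the form
\begin{equation*}
\abs{k_\lambda(x,y)} \lesssim \frac{1}{\abs{\lambda}}\cdot\frac{\abs{\lambda}^{(n+1)/2}}{\bigl(1+\abs{\lambda}^{1/2}\abs{x-y^*}\bigr)^{n+1+\varepsilon}} \cdot \min\Bigl(1,\tfrac{y_{n+1}}{\abs{x-y^*}}\Bigr),
\end{equation*}
where $y^*$ is the reflected point, and which are uniform for $\lambda\in\Sigma_\theta$. Integrating over $\partial\Sigma_\nu$ with $\abs{\psi}\le\norm{\psi}_\infty$ and substituting $r=\abs{\lambda}$, we get
\begin{equation*}
\int_0^\infty r^{(n+1)/2-1}\bigl(1+r^{1/2}\abs{x-y^*}\bigr)^{-n-1-\varepsilon} \dd r \simeq \abs{x-y^*}^{-(n+1)}.
\end{equation*}
So $\abs{\psi(\mathcal{A})f}$ is dominated by $\norm{\psi}_\infty$ times the positive operator with kernel $\abs{x-y^*}^{-(n+1)}\min\bigl(1, y_{n+1}/\abs{x-y^*}\bigr)$, up to a harmless factor. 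This operator is essentially a Hardy-type operator in the normal variable. We would show its boundedness on $L^p_\gamma(H)$ in the full range $-p-1<\gamma<2p-1$ by integrating out $x'$ and applying Schur's test with test functions $x_{n+1}^{a}$. The two endpoint conditions should then come out as $\gamma<2p-1$, from the extra vanishing in $y_{n+1}$, and $\gamma>-p-1$, from the decay. If the correction kernel instead carries a derivative-type singularity, we would keep the first-order vanishing and run a Calder\'on--Zygmund argument only for the $A_p$ part; we expect that to be the delicate point.

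Finally, the angle $0$ comes from the fact that all kernel bounds are uniform on $\Sigma_\theta$ for every $\theta<\pi$, so $\nu$ and $\tau$ can be taken arbitrarily small. Boundedness on $H_0^\infty$ then extends in the standard way via the McIntosh convergence lemma.
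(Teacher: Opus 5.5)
There is a genuine gap, and it sits in the correction term, which carries the whole content of the theorem. Your overall architecture is the right one and matches the paper: split into the Dirichlet resolvent plus a correction, contour integral, integrate out the tangential variables, and reduce to a positive one-dimensional kernel on $L^p((0,\infty),y^\gamma\,\mathrm{d}y)$. Schur's test with power functions is also a legitimate, and cleaner, substitute for the paper's weak-type estimate plus Marcinkiewicz interpolation. The problem is that you guess a kernel bound instead of using the one available, and the guess has the wrong structure. The estimates actually at hand (Proposition~\ref{prop:pointwiseEstimate0} and \eqref{eq:pointwiseEstimate1}--\eqref{eq:pointwiseEstimate2}) behave as follows:
\begin{itemize}
\item they carry the factor $y$ in the \emph{output} normal variable, which comes from the no-slip condition;
\item they carry only $e^{-c\tilde y}$ in the input variable;
\item after the contour integral and integration over $x\in\R^n$, they give the kernel $\frac{1}{y+\tilde y}\log\bigl(1+\frac{y}{\tilde y}\bigr)$, which does not vanish as $\tilde y\to0$ and is in fact logarithmically singular there.
\end{itemize}
Even granting your bound, the endpoint bookkeeping fails. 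Since $\abs{x-y^*}\ge y_{n+1}$, your kernel equals $y_{n+1}\abs{x-y^*}^{-(n+2)}$. Integrating it over the tangential variables gives, with $y$ the output and $\tilde y$ the input normal variable, the kernel $\tilde y/(y+\tilde y)^2$. For a nonnegative kernel $K$ homogeneous of degree $-1$, substituting $\tilde y=ty$ and applying Minkowski's inequality shows that boundedness on $L^p(y^\gamma\,\mathrm{d}y)$ holds when $\int_0^\infty K(1,t)\,t^{-(1+\gamma)/p}\dd t<\infty$, and this condition is also necessary for positive kernels. For $K(1,t)=t/(1+t)^2$ this gives $-1<\gamma<2p-1$, not $-p-1<\gamma<2p-1$. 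A single vanishing factor can only move one endpoint:
\begin{itemize}
\item vanishing in the output variable moves the lower endpoint from $-1$ to $-p-1$;
\item vanishing in the input variable moves the upper endpoint from $p-1$ to $2p-1$.
\end{itemize}
So your claim that $\gamma>-p-1$ comes ``from the decay'' is incorrect.

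With the correct kernel, $K(1,t)=\log(1+1/t)/(1+t)$, the same criterion gives exactly $-p-1<\gamma<p-1$. As $t\to0$, the integrand $\log(1/t)\,t^{-(1+\gamma)/p}$ is integrable only for $\gamma<p-1$. So no direct kernel argument based on the known estimates reaches $p-1\le\gamma<2p-1$, and this is the step your proposal is missing: a duality argument. The paper proves the bound for $-p-1<\gamma<p-1$ and then extends it to the full range by passing to the adjoint (\cite[Proposition 10.2.20]{HNVW17}). Under $p\mapsto p'$, $\gamma\mapsto\gamma'=-\gamma/(p-1)$, the interval $(p-1,2p-1)$ is mapped onto $(-p'-1,-1)$, which is already covered.

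Your fallback to a Calder\'on--Zygmund argument is not available outside $A_p$, and it is not needed. After Minkowski's and Young's inequalities in the tangential variables, everything reduces to a positive one-dimensional operator.

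A minor point: you do not need each piece of the splitting to map into the solenoidal space. $\psi(\mathcal{A}_{p,\gamma})f$ is defined through the resolvent of $\mathcal{A}_{p,\gamma}$, and the splitting only serves the estimates.
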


The $H^\infty$-calculus is an extremely useful tool in the study of (S)PDEs for a multitude of reasons. This has become clear since the extension of the Dore-Venni theorem on the sums of closed operators to the setting where one operator admits a bounded $H^\infty$-calculus in \cite{KaltonWeis01}. In particular, it implies that the operator has maximal $L^p$-regularity. Moreover, the functional calculus behaves well under perturbations, see e.g. \cite{KKW06}, and can be used to characterize domains of fractional powers via complex interpolation \cite{Haase06}. Our main motivation, however, is that the boundedness of the $H^\infty$-calculus implies that the operator has \emph{stochastic} maximal $L^p$-regularity \cite{NVW2012SMR}, which is stronger than the deterministic version; see e.g. \cite{NVW2012, AgrestiVeraar2025} for applications to SPDEs.

In the unweighted case, boundedness of the $H^\infty$-calculus of the Stokes operator on the half space is shown in \cite{GigaSohr1991, DHP01}. In the weighted case this result seems to be unknown even within the $A_p$-range. It was proved by Fröhlich in \cite{Froehlich03} that the Stokes operator on $L^p_\sigma(H; w\,\mathrm{d}x)$, where $w$ is an arbitrary weight of class $A_p$, generates a bounded analytic $C_0$-semigroup and has maximal $L^p$-regularity. Outside the class of $A_p$-weights, despite the promising potential applications to boundary value problems for the stochastic Navier-Stokes equations, there seem to exist no results on the analysis of the Stokes operator yet.

As a guideline of what one could even hope for in this regime, one could take a look at the analytical approach to the Laplace operator with zero Dirichlet boundary conditions $\Delta_\mathrm{Dir}$ in $L^p_\gamma(H),\,1<p<\infty,\,-p-1<\gamma<2p-1,$ developed in \cite{LindemulderVeraar20}. There it is shown that $\Delta_\mathrm{Dir}$ is sectorial and admits a bounded $H^\infty$-calculus in $L^p_\gamma(H)$, and a counterexample is given which demonstrates that the classical heat semigroup associated to $\Delta_\mathrm{Dir}$ does not preserve $L^p_\gamma(H)$ for $\gamma\notin(-1-p,2p-1).$ In view of this fact, our range of $\gamma$ is in the nature of best possible. We would also like to mention that for the Laplacian, boundedness of the $H^\infty$-calculus has furthermore been proven in weighted Sobolev spaces, with either Dirichlet or Neumann boundary condition, and in the setting of rough domains, in \cite{LLRV2025, LLRV2026}. An interesting question would be whether the solution formula developed in \cite{Ukai1987} allows for a transference of these results to the Stokes operator.

The article is structured as follows: In Section \ref{sec:WeightedSpaces} we define the weighted $L^p$-spaces and collect some properties.
Section \ref{sec:pointwiseEstimates} presents some pointwise estimates.
In Section \ref{sec:WeightedEstimates} the main results are proved.

\textbf{Notation.}
When $X$ and $Y$ are Banach spaces, by $\mathcal{L}(X,Y)$ we denote the space of all linear and bounded operators from $X$ to $Y$ endowed with the usual operator norm. Write $\mathcal{L}(X) = \mathcal{L}(X,X)$ for short. The identity operator is denoted by $\identity$. If $A$ is a linear operator, then domain, range, spectrum and resolvent set are denoted as usual by  $\mathcal{D}(A), \mathcal{R}(A), \sigma(A)$ and $\rho(A)$, respectively. When $\lambda \in \rho(A)$, write $R(\lambda; A) \coloneqq (\lambda - A)^{{-1}}$ for the resolvent operator.

Denote by $C_0^\infty (H)$ the space of compactly supported smooth functions on $H$. When equipped with the usual inductive limit topology, we denote it by $\mathcal{D}(H)$ and by $\mathcal{D}^\prime(H)$ its topological dual. For $1\leq p\leq\infty,$ denote by $L_{\mathrm{loc}}^1(H)$ the space of all locally integrable functions, i.e. measurable functions on $H$ that are in $L^1(K)$ for any compact subset $K \subseteq H$.
For $p\in(1,\infty)$, denote the Hölder conjugate exponent by $p^\prime\coloneqq \frac{p}{p-1}$.

Let $(S, \mathcal{A}, \mu)$ be any measure space. For $p\in [1,\infty)$ define the number
$$\norm{f}_{L^{p,\infty}(S,\mu)} \coloneqq \sup_{r>0} r\,\Bigl(\mu(\abs{f} > r)\Bigl)^{\frac{1}{p}}.$$
The space of all strongly $\mu$-measurable functions $f$ for which this value is finite is called \emph{weak-$L^p$} and denoted $L^{p,\infty}(S,\mu)$.

The symbol $\lesssim$ means that an inequality holds up to a constant; to be more specific, $\lesssim_\Phi$ may be used to state that this constant is allowed to depend on some variable $\Phi$.

\section{Weighted Spaces}\label{sec:WeightedSpaces}
Let $\Omega \in \{\R^{n+1}, H\}$. A \emph{weight} on $\Omega$ is a locally integrable function $w$ with $0 < w < \infty$.

Although we will also consider weights outside this range, let us quickly recall weights of class $A_p$. Let $p \in (1,\infty)$. A weight $w$ is called an \emph{$A_p$-weight} or \emph{Muckenhoupt weight}, if there exists a $C\geq0$ such that
$$\Bigl(\frac{1}{\abs{Q}}\int_Q w(x)\dd x\Bigr)\Bigl(\frac{1}{\abs{Q}}\int_Q w(x)^{-\frac{1}{p-1}}\dd x\Bigr)^{p-1}\leq C$$
for every cube $Q \subset \Omega$ with sides parallel to the coordinate axes; one then writes $w\in A_p.$ The smallest such constant will be called the \emph{$A_p$-constant} of $w$ and denoted by $A_p(w)$. One may exchange the cubes for balls in the definition, but note that this may alter the $A_p$-constant.

By Jensen's inequality the $A_p$-classes form a sequence of sets of weights which is increasing in $p$. Also observe that $w\in A_p$ automatically implies that $w\in L^1_{\mathrm{loc}}.$ For $\gamma \in \R$, weights of the form
\begin{equation*}
w(x) = x_{n+1}^\gamma, \quad x \in \Omega,
\end{equation*} 
are a Muckenhoupt weight whenever $-1<\gamma<p-1,$ cf. \cite[Lemma 2.2]{FarwigSohr97}. For further properties of $A_p$-weights see \cite{GCRdF85, Grafakos2004}.

For a weight $w$ on $\Omega$ define the \emph{weighted $L^p$-space}
$L^p_w(\Omega)$ as the space of all measurable functions on $\Omega$ such that
$$\norm{u}_{p,w,\Omega} = \Bigl(\int_\Omega \abs{u}^p w\dd x\Bigr)^{1/p}< \infty.$$
The \emph{conjugate weight} is given by
$$w^\prime\coloneqq w^{-\frac{1}{p-1}}.$$
This parallels the role of the conjugate $L^p$-exponent as can be seen by the following straightforward generalization of Hölder's inequality.
\begin{lemma}[weighted Hölder inequality]\label{th:weightedHölder}
Let $w$ be a weight and $p \in (1,\infty).$ If $u\in L_w^p(\Omega)$, $v\in L_{w^\prime}^{p^\prime}(\Omega),$ then
$$\norm{uv}_1 \leq \norm{u}_{w,p}\norm{v}_{w^\prime,p^\prime}.$$
\end{lemma}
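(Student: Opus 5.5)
The plan is to reduce the weighted statement to the classical unweighted Hölder inequality by splitting the weight between the two factors. Since $w$ is a weight, $0<w<\infty$ pointwise, so every power $w^{s}$ is a well-defined, finite, positive measurable function. On $\Omega$ we therefore have the pointwise identity
$$\abs{uv} = \bigl(\abs{u} w^{1/p}\bigr)\bigl(\abs{v} w^{-1/p}\bigr).$$

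Next I apply the ordinary Hölder inequality with exponents $p$ and $p^\prime$ to the two factors $\abs{u}w^{1/p}$ and $\abs{v}w^{-1/p}$. The first factor contributes
$$\Bigl(\int_\Omega \abs{u}^p w \dd x\Bigr)^{1/p} = \norm{u}_{w,p}.$$
The second factor contributes $\bigl(\int_\Omega \abs{v}^{p^\prime} w^{-p^\prime/p}\dd x\bigr)^{1/p^\prime}$.

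The one small computation is to identify this second integral. Since $p^\prime/p = \frac{1}{p-1}$, we get $w^{-p^\prime/p} = w^{-\frac{1}{p-1}} = w^\prime$. Hence the second factor equals $\norm{v}_{w^\prime,p^\prime}$, and multiplying the two bounds gives the claim.

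There is no genuine obstacle here. The only points to check are that the pointwise factorization is legitimate, which uses $0<w<\infty$, and that the exponent bookkeeping produces exactly the conjugate weight. Finiteness of the right-hand side is part of the hypotheses, so the inequality also shows $uv\in L^1(\Omega)$.
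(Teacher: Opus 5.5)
Your proof is correct and is the paper's argument: the paper notes $w^{1/p}(w^\prime)^{1/p^\prime}=1$ and applies the classical H\"older inequality. That is exactly your factorization $\abs{uv}=\bigl(\abs{u}w^{1/p}\bigr)\bigl(\abs{v}w^{-1/p}\bigr)$ together with your check that $w^{-p^\prime/p}=w^\prime$.
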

\begin{proof}
Note that by definition $w^{1/p} (w^\prime)^{1/p^\prime} = 1$ and apply the classical unweighted Hölder inequality.
\end{proof}

\begin{remark}\label{rem:weightedLqDual}
In particular, we have the duality relation
$$(L_w^p(\Omega))^\prime = L_{w^\prime}^{p^\prime}(\Omega)$$
with respect to the dual pairing given by
$$\scp{\cdot}{\cdot}\colon L_{w^\prime}^{p^\prime} \times L_w^p \to \R,\quad (u,v) \mapsto \scp{u}{v} \coloneqq \int u \cdot v \dd x.$$
\end{remark}

When $w=w_\gamma,$ write
$$\gamma^\prime = -\frac{\gamma}{p-1}.$$
With this notation it holds $(w_\gamma)^\prime = w_{\gamma^\prime}.$ Note that $\gamma^\prime \in (-p-1,-1)$ when $\gamma \in (p-1,2p-1)$ and vice versa.

\begin{remark}\label{rem:weightedLqReflexive}
The space $L^p_w(\Omega)$ is a reflexive Banach space, because $L^p(\Omega)$ is a reflexive Banach space and the mapping $u\mapsto uw^{1/p}$ is an isometric isomorphism from $L^p_w(\Omega)$ to $L^p(\Omega)$. Consequently, $L_{w,\sigma}^p(\Omega)$ is reflexive as a closed subspace of $L^p_w(\Omega).$
\end{remark}

\section{Pointwise Estimates}\label{sec:pointwiseEstimates}
In this section we recall some of the pointwise estimates derived in \cite{DHP01} which will be needed in the following sections. To improve readability, we will denote the normal component $x_{n+1}$ of $(x_1,\dots,x_{n+1})^\mathrm{T}\in H$ by $y>0$ and the remaining tangential components $(x_1,\dots,x_n)^\mathrm{T}$ by $x\in\R^n$ again, since no confusion will arise. Likewise, we split the solution $u$ to the Stokes resolvent problem into $u = (v, w)^\mathrm{T},$ where v denotes the tangential and w the normal component, and correspondingly write the right hand side as $f=(f_v,f_w)^\mathrm{T}.$

Under the assumptions of Theorem \ref{th:sectoriality}, $u$ may be represented as

\begin{equation}\label{eq:StokesSolutionFormula}
\begin{aligned}
    v &= (\lambda - \Delta_D)^{-1} f_v + v_2,\\
    w &= (\lambda - \Delta_D)^{-1} f_w + w_2
\end{aligned}    
\end{equation}

with some remainder terms $v_2, w_2$. Define $\hat{r}_v \,\colon\, \R^n \times \R_{>0} \times \R_{>0} \times \Sigma_\theta \to \mathbb{C}^{n\times n}$ by
$$\hat{r}_v(\xi,y,\tilde{y},\lambda) = \frac{e^{-\abs{\xi}} - e^{-\omega y}}{\omega - \abs{\xi}} \frac{\xi \xi^T}{w \abs{\xi}}e^{-\omega \tilde{y}},$$
where $\omega(\lambda, \xi) \coloneqq (\abs{\lambda}+\abs{\xi}^2)^{\frac{1}{2}},$ and set $r_v \coloneqq \mathcal{F}_{\xi \to x}^{-1} \;\hat{r}_v$ where $\mathcal{F}^{-1}$ denotes the inverse Fourier transform.
Then the remainder term $v_2$ may for $x \in \R^n, y > 0$ be represented as
\begin{align}\label{eq:v2}
v_2(x,y) = \int_0^\infty \int_{\R^n} r_v(x-\tilde{x}, y, \tilde{y}, \lambda) f_v(\tilde{x}, \tilde{y}) \dd \tilde{x} \dd \tilde{y}.
\end{align}

Hence the map $f_v \mapsto v$ is obviously linear. Note that under the change of coordinates

\begin{equation*}
\xi \to \abs{\lambda}^{\frac{1}{2}} \xi, \quad
y \to \frac{y}{\abs{\lambda}^{\frac{1}{2}}}, \quad
\tilde{y} \to \frac{\tilde{y}}{\abs{\lambda}^{\frac{1}{2}}}, \quad
x \to \frac{x}{\abs{\lambda}^{\frac{1}{2}}},
\end{equation*}

one has

\begin{equation*}
\hat{r}_v(\xi,y,\tilde{y},\lambda) = \frac{1}{\abs{\lambda}^{\frac{1}{2}}} \hat{r}_v\left(\frac{\xi}{\abs{\lambda}^{\frac{1}{2}}}, \abs{\lambda}^{\frac{1}{2}} y, \abs{\lambda}^{\frac{1}{2}}\tilde{y}, \frac{\lambda}{\abs{\lambda}}\right)
\end{equation*}

and hence

\begin{equation}\label{eq:rv_scaling}
r_v(x,y,\tilde{y},\lambda) = \abs{\lambda}^{\frac{n-1}{2}} r_v\left(\abs{\lambda}^{\frac{1}{2}}x, \abs{\lambda}^{\frac{1}{2}}y, \abs{\lambda}^{\frac{1}{2}}\tilde{y}, \frac{\lambda}{\abs{\lambda}}\right).
\end{equation}

We furthermore have the following pointwise estimate on $r_v$.

\begin{proposition}[{\cite[Proposition 3.1]{DHP01}}]\label{prop:pointwiseEstimate0}
Let $\theta\in(0,\pi)$. Then there exist constants $M,c > 0$ such that
\begin{equation*}
\abs{r_v(x,y,\tilde{y}, \lambda)} \leq M y e^{-c\tilde{y}}\int_0^\infty \frac{s^n}{1+y+s}e^{-cs(\abs{x}+y+\tilde{y})}\dd s,
\end{equation*}
where $x\in\R^n, y,\tilde{y}>0$ and $\lambda\in\mathbb{C}$ with $\abs{\lambda}=1$ and $\abs{\arg\lambda}\leq\theta<\pi.$
\end{proposition}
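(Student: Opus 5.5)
This pointwise estimate is quoted from \cite[Proposition 3.1]{DHP01}. To reprove it, I would work directly with the Fourier representation of $r_v$ at $\abs{\lambda}=1$, $\abs{\arg\lambda}\le\theta$.

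The first step is to rewrite the difference quotient in $\hat r_v$ as an integral. For $\omega=\omega(\lambda,\xi)$ we have
$$\frac{e^{-\abs{\xi}y}-e^{-\omega y}}{\omega-\abs{\xi}} = y\int_0^1 e^{-y(\tau\omega+(1-\tau)\abs{\xi})}\dd\tau .$$
(I read the first exponent in the definition of $\hat r_v$ as $e^{-\abs{\xi}y}$.) This produces the factor $y$. It also writes $\hat r_v$ as a superposition of symbols
$$m_\tau(\xi)=\frac{\xi\xi^T}{\omega\abs{\xi}}\,e^{-y(\tau\omega+(1-\tau)\abs{\xi})-\omega\tilde y}.$$

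The next step concerns the growth of these symbols in $\xi$. Since $\operatorname{Re}\omega\ge c(1+\abs{\xi})$ uniformly for $\abs{\arg\lambda}\le\theta$, we get $\abs{e^{-\omega\tilde y}}\le e^{-c\tilde y}e^{-c\abs{\xi}\tilde y}$. This gives the factor $e^{-c\tilde y}$ and decay in $\abs{\xi}(y+\tilde y)$.

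The remaining task is to convert the $\xi$-integral into the claimed $s$-integral carrying the weight $e^{-cs\abs{x}}$. For this I would use the standard technique: introduce polar coordinates in $\xi$, where the radial variable $s=\abs{\xi}$ gives the measure $s^{n-1}\dd s$. Then deform the integration contour in the radial variable, or equivalently rotate $\xi\mapsto\xi+i\eta$ with $\abs{\eta}\le c\abs{\xi}$ in the direction of $x$. Both $\abs{\xi}$ and $\omega=(\lambda+\xi\cdot\xi)^{1/2}$ extend holomorphically to a conic neighbourhood of $\R^n\setminus\{0\}$, with real parts still comparable to $\abs{\xi}$ and $1+\abs{\xi}$. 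The shift therefore produces the factor $e^{-cs\abs{x}}$.

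An alternative is to integrate by parts in $\xi$ repeatedly and use the symbol bounds $\abs{\partial_\xi^\alpha m_\tau}\lesssim \abs{\xi}^{1-\abs{\alpha}}(1+\abs{\xi})^{-1}e^{-cs(y+\tilde y)}$. This gives $\abs{x}^{-k}$ decay, and summing over $k$ recovers the exponential form.

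The factor $\frac{s}{1+s}\cdot\frac{1}{\omega}$ combines with $s^{n-1}$ to give $s^n/(1+s)$. The extra $y$ in the denominator $1+y+s$ comes from the exponentials. When $y\gtrsim 1+s$, we use $\tau\operatorname{Re}\omega\ge \tau c$ together with the integral over $\tau$: $\int_0^1 e^{-y\tau c}\dd\tau\lesssim \min(1,1/y)$. Together with $1/\omega\sim 1/(1+s)$ this yields $1/(1+y+s)$.

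I expect the main obstacle to be the uniformity of the contour shift. The branch point of $\abs{\xi}$ at $\xi=0$, and the singularity of $\xi\xi^T/\abs{\xi}$ there, must be handled without losing the $s^n$ factor near $s=0$. I would treat the region near the origin separately, using the vanishing of $\xi\xi^T/\abs{\xi}$ to order one.
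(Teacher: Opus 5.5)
The paper gives no proof of this proposition; it is imported from \cite{DHP01}. The symbol displayed there has typos, and your reading with $e^{-\abs{\xi}y}$, $\omega$ in the denominator and $\omega=(\lambda+\abs{\xi}^2)^{1/2}$ is the intended one. So the only question is whether your sketch closes, and its main line does.

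For $\zeta=\xi+i\eta$ with $\abs{\eta}\le\varepsilon\abs{\xi}$ and $\varepsilon=\varepsilon(\theta)$ small, take the principal roots $q(\zeta)=(\zeta\cdot\zeta)^{1/2}$ and $\omega=(\lambda+\zeta\cdot\zeta)^{1/2}$. They satisfy $\mathrm{Re}\,q(\zeta)\gtrsim\abs{\xi}$ and $\mathrm{Re}\,\omega\gtrsim 1+\abs{\xi}$, uniformly for $\abs{\lambda}=1$, $\abs{\arg\lambda}\le\theta$. Hence $\mathrm{Re}(\tau\omega+(1-\tau)q(\zeta))\ge c(\tau+\abs{\xi})$, and
\[
\abs{\hat r_v(\zeta)}\lesssim y\,e^{-c\tilde y}\,\frac{\abs{\xi}}{(1+y)(1+\abs{\xi})}\,e^{-c\abs{\xi}(y+\tilde y)}\le y\,e^{-c\tilde y}\,\frac{\abs{\xi}}{1+y+\abs{\xi}}\,e^{-c\abs{\xi}(y+\tilde y)}.
\]
Now move $\R^n$ to the surface $\{\xi+i\varepsilon\abs{\xi}x/\abs{x}\}$, which is justified by the decay $e^{-c\abs{\xi}(y+\tilde y)}$ at infinity. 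This multiplies the integrand by $e^{-\varepsilon\abs{\xi}\abs{x}}$ with a bounded Jacobian, and polar coordinates give exactly the claimed $s$-integral.

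The origin needs no separate treatment. The integrand is continuous and vanishes there, so you can excise a ball of radius $\rho\to0$. Alternatively, deform only the coordinate along $x$ for fixed transversal $\xi'\neq0$, which never meets the branch point. Leaving a \emph{fixed} neighbourhood of $0$ unshifted would actually be harmful, since there you lose the factor $e^{-cs\abs{x}}$ for large $\abs{x}$.

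Two statements in your sketch are wrong, though neither is needed for the main line.

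First, rotating the radial variable $s\mapsto se^{\pm i\phi}$ in polar coordinates is not equivalent to the shift along $x$. In a direction $\sigma\in S^{n-1}$ it only yields $e^{-cs\abs{\sigma\cdot x}}$, which degenerates for $\sigma\perp x$. For $n\ge 2$, averaging over the sphere then gives merely $\min\{1,(s\abs{x})^{-1}\}$. Only the shift in the direction of $x$ produces the uniform factor $e^{-cs\abs{x}}$.

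Second, the integration-by-parts alternative fails as stated. With only the absolute bounds $\abs{\partial_\xi^\alpha m_\tau}\lesssim\abs{\xi}^{1-\abs{\alpha}}$, the derivatives are integrable near $0$ only for $\abs{\alpha}\le n$. Global integration by parts therefore gives at most $\abs{x}^{-n}$, which is not integrable over $\R^n$, and integrability in $x$ is exactly what the paper uses next. Moreover, the target is not exponentially decaying in $\abs{x}$: for fixed $y,\tilde y$ the right-hand side behaves like $\abs{x}^{-n-1}$ as $\abs{x}\to\infty$. So there is no exponential form to recover by summing over $k$. That route would need a dyadic decomposition in $\abs{\xi}$, integrating by parts only on the pieces with $\abs{\xi}\gtrsim\abs{x}^{-1}$.
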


All arguments given for $v$ can analoguously be given for $w$, so consideration of $w$ is omitted in the following.

Fix $h \in H_0^\infty$ and write
\begin{equation}
\begin{aligned}
    k_{h,v}(x,y,\tilde{y}) &= \frac{1}{2\pi i} \int_\Gamma h(\lambda) r_v(x,y,\tilde{y},-\lambda) \dd \lambda, \quad x \in \R^n,\, y, \tilde{y} > 0\\
    (T_{h,v}f)(x,y) &= \int_0^\infty \int_{\R^n} k_{h,v}(x-\tilde{x}, y, \tilde{y}) f(\tilde{x}, \tilde{y}) \dd \tilde{x} \dd \tilde{y}, \quad x \in \R^n,\, y>0.
\end{aligned}
\end{equation}

In \cite[Section 6]{DHP01} the pointwise estimates
\begin{equation}\label{eq:pointwiseEstimate1}
\abs{k_{h,v}(x,y,\tilde{y})} \lesssim \norm{h}_\infty k_1(x,y,\tilde{y}), \quad x \in R^n, \,y,\tilde{y} > 0,
\end{equation}
\begin{equation}\label{eq:pointwiseEstimate2}
\int_{\R^n} \abs{k_1(x,y,\tilde{y})} \dd x \lesssim \frac{1}{y+\tilde{y}} \log(1+\frac{y}{\tilde{y}}), \quad y,\tilde{y} > 0,
\end{equation}
are proved.

\section{Weighted estimates}\label{sec:WeightedEstimates}
\subsection{Sectoriality}
The aim of this section is to prove Theorem \ref{th:sectoriality}. For any $p \in (1,\infty)$ let $\gamma \in (-p-1,2p-1).$ Recall the notation $(x_1,\dots,x_{n+1})^\mathrm{T} = (x,y)^\mathrm{T}\in H$ from Section \ref{sec:pointwiseEstimates} and thus $w_\gamma(x,y) = y^\gamma,$ i.e. $\mathrm{d}w_\gamma(x,y) = \dd x\:y^\gamma\mathrm{d}y.$ Furthermore recall the notation $\gamma^\prime = -\frac{\gamma}{p-1}$ from Section \ref{sec:WeightedSpaces}. Also, fix $\lambda \in \Sigma_\pi.$

\subsubsection{Reduction step}
In view of the representation \eqref{eq:v2} of $v_2$, one may estimate

\begin{align*}
\norm{v_2}^p_{L^p(H,\,\mathrm{d}w)} &= \int_0^\infty \int_{\R^n} \abs{\int_0^\infty \int_{\R^n} r_{v}(x-\tilde{x}, y, \tilde{y}, \lambda)f_v(\tilde{x}, \tilde{y}) \dd \tilde{x} \dd \tilde{y}}^p \dd x\:y^\gamma\mathrm{d}y\\
&\leq \int_0^\infty \int_{\R^n} \left(\int_0^\infty \abs{\int_{\R^n} r_v(x-\tilde{x}, y, \tilde{y})f_v(\tilde{x}, \tilde{y}, \lambda) \dd \tilde{x}} \dd \tilde{y}\right)^p \dd x\:y^\gamma\mathrm{d}y\\
&= \int_0^\infty \int_{\R^n} \left(\int_0^\infty \abs{(r_v\ast f_v)(x, y, \tilde{y}, \lambda)} \dd \tilde{y}\right)^p \dd x\:y^\gamma\mathrm{d}y\\
&= \int_0^\infty \int_{\R^n} \norm{(r_v\ast f_v)(x, y, \cdot, \lambda)}_1^p \dd x\:y^\gamma\mathrm{d}y\\
&\leq \int_0^\infty \left(\int_0^\infty \norm{(r_v\ast f_v)(\cdot, y, \tilde{y}, \lambda)}_p \dd \tilde{y}\right)^p\:y^\gamma\mathrm{d}y\\
&\leq \int_0^\infty \left(\int_0^\infty \norm{r_v(\cdot,y,\tilde{y}, \lambda)}_1 \norm{f_v(\cdot,\tilde{y})}_p \dd \tilde{y}\right)^p\:y^\gamma\mathrm{d}y\\
&\leq \norm{f_v}^p_{L^p(H,\mathrm{d}w)} \int_0^\infty \left(\int_0^\infty \norm{r_v(\cdot,y,\tilde{y}, \lambda)}^{p^\prime}_1\, (\tilde{y})^{\gamma^\prime} \dd \tilde{y}\right)^{p/p^\prime}\:y^\gamma\mathrm{d}y,
\end{align*}
where we used Minkowski's integral inequality, Young's convolution inequality and then the weighted Hölder inequality \ref{th:weightedHölder}. Thus, it remains to show that

\begin{equation}\label{eq:Iv}
I_v \coloneqq \int_0^\infty \left(\int_0^\infty \norm{r_v(\cdot,y,\tilde{y},\lambda)}^{p^\prime}_1\, (\tilde{y})^{\gamma^\prime} \dd \tilde{y}\right)^{p/p^\prime}\:y^\gamma\mathrm{d}y \lesssim \frac{1}{\abs{\lambda}}.
\end{equation}

\subsubsection{\texorpdfstring{Estimate on $I_v$}{Estimate on Iv}}
Combining \eqref{eq:rv_scaling} and Proposition \ref{prop:pointwiseEstimate0}, for all $x\in\R^n,\,y>0$ one gets

\begin{align*}
\int_{\R^n} \abs{r_v(x,y,\tilde{y},\lambda)} \dd x &= \abs{\lambda}^{\frac{n-1}{2}} \int_{\R^n} \abs{r_v\left(\abs{\lambda}^{\frac{1}{2}}x, \abs{\lambda}^{\frac{1}{2}}y, \abs{\lambda}^{\frac{1}{2}}\tilde{y}, \frac{\lambda}{\abs{\lambda}}\right)} \dd x\\
&\lesssim \abs{\lambda}^{\frac{n-1}{2}} \int_{\R^n} \abs{\lambda}^{\frac{1}{2}}y e^{-c\abs{\lambda}^{\frac{1}{2}}\tilde{y}}\int_0^\infty \frac{s^n}{1+\abs{\lambda}^{\frac{1}{2}}y+s}e^{-cs\abs{\lambda}^{\frac{1}{2}}(\abs{x}+y+\tilde{y})}\dd s \dd x\\
&= \abs{\lambda}^{\frac{n}{2}}y e^{-c\abs{\lambda}^{\frac{1}{2}}\tilde{y}} \int_0^\infty \int_{\R^n} \frac{s^n}{1+\abs{\lambda}^{\frac{1}{2}}y+s}e^{-cs\abs{\lambda}^{\frac{1}{2}}(\abs{x}+y+\tilde{y})}\dd x \dd s\\
&\eqsim \abs{\lambda}^{\frac{n}{2}}y e^{-c\abs{\lambda}^{\frac{1}{2}}\tilde{y}} \int_0^\infty \int_0^\infty \frac{s^n}{1+\abs{\lambda}^{\frac{1}{2}}y+s}e^{-cs\abs{\lambda}^{\frac{1}{2}}(r+y+\tilde{y})}r^{n-1} \dd r \dd s\\
&\eqsim \abs{\lambda}^{\frac{n}{2}}y e^{-c\abs{\lambda}^{\frac{1}{2}}\tilde{y}} \int_0^\infty  \frac{1}{1+\abs{\lambda}^{\frac{1}{2}}y+s}\frac{1}{\abs{\lambda}^{\frac{n}{2}}}e^{-cs\abs{\lambda}^{\frac{1}{2}}(y+\tilde{y})} \dd s\\
&= y e^{-c\abs{\lambda}^{\frac{1}{2}}\tilde{y}} \int_0^\infty  \frac{e^{-cs\abs{\lambda}^{\frac{1}{2}}(y+\tilde{y})}}{1+\abs{\lambda}^{\frac{1}{2}}y+s} \dd s,
\end{align*}

where we additionally employed the Fubini-Tonelli theorem, the transformation into polar coordinates $(r,\omega)$ and then repeated integration by parts in the second-to-last step, noting that
$$r^je^{-cs\abs{\lambda}^{\frac{1}{2}}(r+y+\tilde{y})}\big|_{r=0}^\infty = 0 \quad \text{for all} \quad j>0.$$

Set
\begin{equation}\label{eq:definitionC}
C(y,\tilde{y},\lambda) \coloneqq c \abs{\lambda}^{\frac{1}{2}}(1+\abs{\lambda}^{\frac{1}{2}}y)(y+\tilde{y}).
\end{equation}
Then making the substitutions $u = 1+\abs{\lambda}^{\frac{1}{2}}y+s$ and then $t = c\abs{\lambda}^{\frac{1}{2}}(y+\tilde{y}) u$ one has

\begin{align*}
\int_0^\infty  \frac{e^{-cs\abs{\lambda}^{\frac{1}{2}}(y+\tilde{y})}}{1+\abs{\lambda}^{\frac{1}{2}}y+s} \dd s &= \int_{1+\abs{\lambda}^{\frac{1}{2}}y}^\infty  \frac{e^{-c\abs{\lambda}^{\frac{1}{2}}(u-1-\abs{\lambda}^{\frac{1}{2}}y)(y+\tilde{y})}}{u} \dd u\\
&= e^{C(y,\tilde{y},\lambda)} \int^\infty_{C(y,\tilde{y},\lambda)} \frac{e^{-t}}{t}\dd t.
\end{align*}

This is a form of the \emph{exponential integral}, sometimes denoted
$$E_1(x) = \int_x^\infty \frac{e^{-t}}{t}\dd t,\quad x>0,$$
see \cite{AbramowitzStegun1964}.
We need the following observations. Clearly, $E_1(x)$ is well-defined, non-negative and decreasing for all $x>0.$ Furthermore, one has the obvious inequality $E_1(x) \leq \frac{1}{x}\int_x^\infty e^{-t} \dd t = \frac{e^{-x}}{x}.$ It holds for all $x>0$

\begin{equation*}
\int_0^x \frac{1-e^{-t}}{t} \dd t - \int_0^1 \frac{1-e^{-t}}{t} \dd t = \int_1^x \frac{1-e^{-t}}{t} \dd t = \log(x) - \int_1^x \frac{e^{-t}}{t}\dd t = \log(x) - E_1(1) + E_1(x),
\end{equation*}

thus, concerning the asymptotics near $0,$ we find

\begin{equation*}
E_1(x) = -\log(x) + \int_0^x \frac{1-e^{-t}}{t} \dd t - c = -\log(x) + \mathcal{O}(x) - c,
\end{equation*}

where $c = \int_1^\infty \frac{e^{-t}}{t}\dd t - \int_0^1 \frac{1-e^{-t}}{t} \dd t = \int_0^\infty e^{-t} \log(t) \dd t$ is some constant (the so-called \emph{Euler-Mascheroni} constant) and $\int_0^x \frac{1-e^{-t}}{t} \dd t \in \mathcal{O}(x)$ follows immediately by componentwise integration of the power series representation $\frac{1-e^{-t}}{t}=\sum_{n=1}^\infty \frac{1}{n!}(-1)^{n-1}\,t^{n-1}$.

Summing up, we have shown the relation

\begin{equation*}
\int_{\R^n} \abs{r_v(x,y,\tilde{y},\lambda)} \dd x \lesssim y e^{-c\abs{\lambda}^{\frac{1}{2}}\tilde{y}} e^{C(y,\tilde{y},\lambda)}\int_{C(y,\tilde{y},\lambda)}^\infty \frac{e^{-t}}{t} \dd t,
\end{equation*}

and it holds

\begin{equation}\label{eq:exponentialIntegralSmallAsymptotics}
\int_{C(y,\tilde{y},\lambda)}^\infty \frac{e^{-t}}{t} \dd t \eqsim -\log C(y,\tilde{y},\lambda) + \mathcal{O}(y+\tilde{y})
\end{equation}

whenever $y,\tilde{y}\in(0,\delta)$ for some arbitrary but fixed $\delta=\delta(\lambda)>0,$ as well as

\begin{equation}\label{eq:exponentialIntegralLargeAsymptotics}
e^{C(y,\tilde{y},\lambda)}\int_{C(y,\tilde{y},\lambda)}^\infty \frac{e^{-t}}{t} \dd t \leq \frac{1}{C(y,\tilde{y},\lambda)}
\end{equation}

for all $y,\tilde{y}>0,\,\lambda\in\Sigma_\theta.$

We are now ready to estimate $I_v.$ Set $\delta \coloneqq \frac{1}{3}\abs{\lambda}^{-\frac{1}{2}} > 0.$

\begin{align*}
I_v &= \int_0^\infty \left(\int_0^\infty \norm{r_v(\cdot,y,\tilde{y},\lambda)}^{p^\prime}_1\, (\tilde{y})^{\gamma^\prime} \dd \tilde{y}\right)^{p/p^\prime}\:y^\gamma\mathrm{d}y\\
&\leq \int_\delta^\infty \left(\int_0^\infty \left(\frac{y e^{-c\abs{\lambda}^{\frac{1}{2}}\tilde{y}}}{C(y,\tilde{y},\lambda)}\right)^{p^\prime} (\tilde{y})^{\gamma^\prime} \dd \tilde{y}\right)^{p/p^\prime}\:y^\gamma\mathrm{d}y\\
&\qquad + \int_0^\delta \left(\int_\delta^\infty \left(\frac{y e^{-c\abs{\lambda}^{\frac{1}{2}}\tilde{y}}}{C(y,\tilde{y},\lambda)}\right)^{p^\prime} (\tilde{y})^{\gamma^\prime} \dd \tilde{y}\right)^{p/p^\prime}\:y^\gamma\mathrm{d}y\\
&\qquad + \int_0^\delta \left(\int_0^\delta \left( y e^{-c\abs{\lambda}^{\frac{1}{2}}\tilde{y}} e^{C(y,\tilde{y},\lambda)}(-\log C(y,\tilde{y},\lambda)+\mathcal{O}(y+\tilde{y})) \right)^{p^\prime} (\tilde{y})^{\gamma^\prime} \dd \tilde{y}\right)^{p/p^\prime}\:y^\gamma\mathrm{d}y\\
&\eqqcolon I_1 + I_2 + I_3.
\end{align*}

Let us estimate the integrals separately.

\begin{align*}
I_1 &= \int_\delta^\infty \left(\frac{y}{c \abs{\lambda}^{\frac{1}{2}}(1+\abs{\lambda}^{\frac{1}{2}}y)}\right)^p \left(\int_0^\infty \left(\frac{e^{-c\abs{\lambda}^{\frac{1}{2}}\tilde{y}}}{y+\tilde{y}}\right)^{p^\prime} (\tilde{y})^{\gamma^\prime} \dd \tilde{y}\right)^{p/p^\prime}y^\gamma\mathrm{d}y\\
&\leq \int_\delta^\infty \left(\frac{y}{c\abs{\lambda}^{\frac{1}{2}}(1+\abs{\lambda}^{\frac{1}{2}}y)}\right)^p \left(\int_0^\infty \left(\frac{e^{-c\abs{\lambda}^{\frac{1}{2}}\tilde{y}}}{y}\right)^{p^\prime} (\tilde{y})^{\gamma^\prime} \dd \tilde{y}\right)^{p/p^\prime}y^\gamma\mathrm{d}y\\
&= \int_\delta^\infty \left(\frac{1}{c\abs{\lambda}^{\frac{1}{2}}(1+\abs{\lambda}^{\frac{1}{2}}y)}\right)^p \left(\int_0^\infty e^{-ctp^\prime} t^{\gamma^\prime} \frac{1}{\abs{\lambda}^{\gamma^\prime/2}} \frac{1}{\abs{\lambda}^{1/2}} \dd t\right)^{p-1}y^\gamma\mathrm{d}y\\
&= C_1(\gamma)\, \abs{\lambda}^{-\frac{p}{2} + \frac{\gamma}{2} - \frac{p-1}{2}}\int_\delta^\infty \frac{1}{\bigl(1+\abs{\lambda}^{\frac{1}{2}}y\bigr)^p}\:y^\gamma\mathrm{d}y\\
&= C_1(\gamma)\, \abs{\lambda}^{-\frac{p}{2} + \frac{\gamma}{2} - \frac{p-1}{2} - \frac{\gamma}{2} - \frac{1}{2}} \int_\frac{1}{3}^\infty \frac{s^{\gamma}}{(1+s)^p} \:\mathrm{d}s\\
&= \frac{C_1(\gamma) C_2(\gamma)}{\abs{\lambda}^p},
\end{align*}

where $C_1(\gamma) \coloneqq \bigl(\int_0^\infty e^{-ctp^\prime} t^{\gamma^\prime} \dd t\bigr)^{p-1}$ is finite if and only if $\gamma^\prime > -1,$ that is

\begin{equation}\label{eq:GammaCondition1}
\gamma < p - 1,
\end{equation}

and $C_2(\gamma) \coloneqq \int_\frac{1}{3}^\infty \frac{s^\gamma}{\left(1+s\right)^p}\mathrm{d}s$ is finite if and only if $\gamma-p < -1,$ which again gives condition \eqref{eq:GammaCondition1}.

Regarding $I_2,$

\begin{align*}
I_2 &= \int_0^\delta \left(\frac{y}{c(1+\abs{\lambda}^{\frac{1}{2}}y)}\right)^p \left(\int_\delta^\infty \left(\frac{e^{-c\abs{\lambda}^{\frac{1}{2}}\tilde{y}}}{\abs{\lambda}^{\frac{1}{2}}(y+\tilde{y})}\right)^{p^\prime} (\tilde{y})^{\gamma^\prime} \dd \tilde{y}\right)^{p/p^\prime}y^\gamma\mathrm{d}y\\
&\leq \int_0^\delta \left(\frac{y}{c(1+\abs{\lambda}^{\frac{1}{2}}y)}\right)^p \left(\int_\frac{1}{3}^\infty \left(\frac{e^{-ct}}{t}\right)^{p^\prime} t^{\gamma^\prime} \frac{1}{\abs{\lambda}^{\gamma^\prime/2}} \frac{1}{\abs{\lambda}^{1/2}} \dd t\right)^{p-1} y^\gamma\mathrm{d}y\\
&\eqsim \abs{\lambda}^{\frac{\gamma}{2} - \frac{p-1}{2}} \int_0^\delta \frac{y^p}{\bigl(1+\abs{\lambda}^{\frac{1}{2}}y\bigr)^p}\:y^\gamma\mathrm{d}y\\
&=\abs{\lambda}^{\frac{\gamma}{2} - \frac{p-1}{2} - \frac{p+\gamma}{2} - \frac{1}{2}} \int_0^\frac{1}{3} \frac{s^{p+\gamma}}{(1+s)^p} \mathrm{d}s\\
&= \frac{C_3(\gamma)}{\abs{\lambda}^p},
\end{align*}

where $C_3(\gamma) \coloneqq \int_0^\frac{1}{3} \frac{s^{p+\gamma}}{(1+s)^p} \mathrm{d}s$ is finite if and only if $p+\gamma > -1,$ that is

\begin{equation}\label{eq:GammaCondition3}
\gamma > -1-p.
\end{equation}

Lastly, note that $C(y,\tilde{y},\lambda) < 1$ for all $y,\tilde{y} \in (0,\delta),$ which in particular implies $\log C(y,\tilde{y},\lambda) < 0.$
We estimate

\begin{align*}
I_3 &= \int_0^\delta \left(\int_0^\delta \left( y e^{-c\abs{\lambda}^{\frac{1}{2}}\tilde{y}} e^{C(y,\tilde{y},\lambda)}(-\log C(y,\tilde{y},\lambda)+\mathcal{O}(C(y,\tilde{y},\lambda))) \right)^{p^\prime} (\tilde{y})^{\gamma^\prime} \dd \tilde{y}\right)^{p/p^\prime}y^\gamma\mathrm{d}y\\
&\eqsim \int_0^\delta \left(\int_0^\delta \Bigl(\mathcal{O}(1)-\log \bigl[\abs{\lambda}^{\frac{1}{2}}(1+\abs{\lambda}^{\frac{1}{2}}y)(y+\tilde{y})\bigr]\Bigr)^{p^\prime} (\tilde{y})^{\gamma^\prime} \dd \tilde{y}\right)^{p/p^\prime}y^{p+\gamma}\mathrm{d}y\\
&\lesssim \int_0^\delta \left(\int_0^\delta \bigl(1-\log \abs{\lambda}^{\frac{1}{2}}\tilde{y}\bigr)^{p^\prime} (\tilde{y})^{\gamma^\prime} \dd \tilde{y}\right)^{p/p^\prime}y^{p+\gamma}\mathrm{d}y\\
&= \int_0^\delta \left(\int_0^\frac{1}{3} \bigl(1-\log t\bigr)^{p^\prime} t^{\gamma^\prime} \frac{1}{\abs{\lambda}^{\gamma^\prime/2}}\frac{1}{\abs{\lambda}^{1/2}} \dd t\right)^{p/p^\prime}y^{p+\gamma}\mathrm{d}y\\
&= \abs{\lambda}^{\frac{\gamma}{2} - \frac{p-1}{2}} \int_0^\delta C_4(\gamma)\:y^{p+\gamma}\mathrm{d}y\\
&= \abs{\lambda}^{\frac{\gamma}{2} - \frac{p-1}{2} - \frac{p+\gamma}{2} - \frac{1}{2}}\: C_4(\gamma) \int_0^\frac{1}{3} s^{p+\gamma}\mathrm{d}s\\
&= \frac{C_4(\gamma)\,C_5(\gamma)}{\abs{\lambda}^p},
\end{align*}
where $C_4(\gamma) \coloneqq \left(\int_0^\frac{1}{3} \bigl(1-\log t\bigr)^{p^\prime} t^{\gamma^\prime} \dd t\right)^{p/p^\prime}$ is finite if and only if $\gamma^\prime > -1,$ or in terms of $\gamma,$

\begin{equation}\label{eq:GammaCondition4}
\gamma < p-1.
\end{equation}

Moreover, $C_5(\gamma) \coloneqq \int_0^\frac{1}{3} s^{p+\gamma}\mathrm{d}s$ is finite if and only if $p + \gamma > -1,$ that is

\begin{equation}\label{eq:GammaCondition5}
\gamma > - 1 - p.
\end{equation}

To conclude, we have proven \eqref{eq:Iv} given that
$-1 - p < \gamma < p - 1,$ which gives us the desired resolvent estimate \eqref{eq:resolventEstimate} in this range. Then it holds for any $-1 - p < \gamma < 2p - 1$ by duality. \qed

\begin{remark}
Let us mention that the estimates from \cite{DHP01} for the unweighted case could be adapted to the situation $-1<\gamma<p-1.$ Our estimates need to be more subtle, however, in order to cover the values of $\gamma$ for which $w_\gamma$ lies outside the Muckenhoupt range.
\end{remark}

\begin{remark}
The scaling of $\delta\sim\abs{\lambda}^{-1/2}$ is chosen so that the integral bounds become independent of $\lambda$ under the natural transformation $t=\abs{\lambda}^{1/2}\tilde{y}.$ This guarantees uniformity of the estimates on $C_i,\,i=2,\dots,5,$ with respect to $\lambda.$

The choice of the prefactor $\frac{1}{3}$ gives $C(y,\tilde{y},\lambda) < 1$ and $\log C(y,\tilde{y},\lambda) < 0$ uniformly in $\lambda$ for all $y,\,\tilde{y} \in (0,\delta),$ which is useful in the estimate on $I_3.$
\end{remark}

\begin{remark}
In \cite{FarwigSohr97}, weights which do not blow up at the boundary are treated. This is useful in applications to exterior domains. More precisely, for an exterior domain $\Omega\subseteq\R^{n+1}$ and $1\leq q<\infty,$ the authors in \cite{FarwigSohr97} restrict their considerations to the weight class
\begin{equation*}
\begin{aligned}
\mathscr{A}_q(\Omega) = \Bigl\{&w\in A_q\,\colon\, \text{there is a bounded domain }G=G(w)\subset\Omega\text{ and an }\varepsilon=\varepsilon(w)>0\\
&\text{ such that }\{x\in\Omega\,\colon\, \distance(x,\partial\Omega)<\varepsilon\}\subset G\text{ and }w\in C^0(\overline{G}),\,w|_{\overline{G}}>0\Bigr\}.
\end{aligned}
\end{equation*}

Let us call a weight $w$ on $\Omega$ \emph{asymptotically a power weight} if there is a $v\in\mathscr{A}_q(\Omega)$ such that $w$ is of the form
\begin{equation*}
w^{\mathrm{apw}}_\gamma(x) =
\begin{cases}
v(x)\quad &x_{n+1}< \varepsilon(v),\\
x_{n+1}^\gamma\quad &x_{n+1}\geq \varepsilon(v),
\end{cases}
\end{equation*}
where $\varepsilon(v)>0$ is the number from the definition of the class $\mathscr{A}_q(\Omega)$ and $\gamma\in(-1-p,2p-1).$

Then if $w$ is asymptotically a power weight on $\Omega=H$, an examination of the above proof shows that Theorem \ref{th:sectoriality} is also valid for $\lambda$ uniformly bounded away from $0,$ that is for $\lambda\in\Sigma_\theta\setminus B_R(0)$ with arbitrary fixed $R>0,$ on the spaces $L^p_w(H),\,1<p<\infty.$ 
\end{remark}

\subsection{\texorpdfstring{The Stokes operator on $L^p_\gamma (H)$}{The Stokes operator on L-p-gamma(H)}}\label{sec:StokesOperator}
In this section we give a construction for the Stokes operator on
$$L^p_{\sigma,\gamma}(H) \coloneqq \overline{\{ f \in C_0^\infty(H)^n \,\colon\,\divergence f = 0\}}^{\norm{\cdot}_{p,\gamma}}$$
when $w=w_\gamma,\ \gamma \in (p-1,2p-1).$ Recall that the space $L^p_{\sigma,\gamma}(H)$ is complete and reflexive as a closed subspace of $L^p_{\gamma}(H).$

For $p\in (1,\infty)$ and $\lambda \in \Sigma_\theta$ set

\begin{equation*}
\mathrm{R}(\lambda)\,\colon\: L^p_\sigma(H) \to L^p_\sigma(H),\: f \mapsto u_\lambda,
\end{equation*}

where $u_\lambda = (v,w)^\mathrm{T}$ is given by \eqref{eq:StokesSolutionFormula}. It is known from the unweighted theory (see e.g. \cite{DHP01}; the argument given there for $\lambda>0$ works just fine for $\lambda\in\Sigma_\theta$, see \cite[Proposition 4.6]{EngelNagel00}) that $\mathrm{R}(\lambda)$ is the resolvent of the Stokes operator $A$ on $L^p_\sigma(H),$ i.e.
$$\mathrm{R}(\lambda) = (\lambda-A)^{-1}\quad \text{for all}\:\lambda\in\Sigma_\theta.$$
In particular, it satisfies the resolvent identity
$$\mathrm{R}(\lambda) - \mathrm{R}(\mu) = (\lambda-\mu)\mathrm{R}(\lambda)\mathrm{R}(\mu),\quad \lambda,\mu \in \Sigma_\theta$$
and one has
$$\underset{n\to\infty}{\lim} \lambda_n \mathrm{R}(\lambda_n) f = f\quad \text{in }L^p(H)$$
for any $f\in L^p_\sigma(H)$ and any sequence $(\lambda_n)_{n\in\mathbb{N}} \subseteq \Sigma_\theta$ with $\underset{n\to\infty}{\lim} \abs{\lambda_n} = \infty.$

In order to extend these properties to $L^p_{\sigma,\gamma}(H),$ we will use the space $X_0 \coloneqq L^p_\sigma(H) \cap L^p_{\gamma}(H),$ which is easily seen to be dense in $L^p_{\sigma, \gamma}(H)$ since it contains $\{ f \in C_0^\infty(H)^n \,\colon\, \divergence f = 0\} \cap L^p_\gamma(H).$ By Theorem \ref{th:sectoriality}, one moreover sees that
\begin{equation}\label{eq:ResolventonDenseSubspace}
\mathrm{R}(\lambda)\,\colon\, X_0 \to X_0,
\end{equation}
thus the resolvent identity is also valid in this space by restriction. 
Again by Theorem \ref{th:sectoriality} and density,
\eqref{eq:ResolventonDenseSubspace} extends to a bounded linear operator on $L^p_{\sigma,\gamma}(H)$ which we denote by $\mathrm{R}_\gamma(\lambda).$
Then the resolvent identity holds on $L^p_{\sigma,\gamma}(H)$ which means that the family
$$\{\mathrm{R}_\gamma(\lambda)\,\colon\, \lambda \in \Sigma_\theta\}$$
defines a pseudoresolvent in $L^p_{\sigma,\gamma}(H).$ Note that this implies that $\mathcal{R}(\mathrm{R}_\gamma(\lambda))$ and $\mathcal{N}(\mathrm{R}_\gamma(\lambda))$ are independent of $\lambda,$ see \cite[III.4a]{EngelNagel00}.

Now we are going to show that the constructed extension
$$\mathrm{R}_\gamma(\lambda)\,\colon\,L^p_{\sigma,\gamma}(H)\to L^p_{\sigma,\gamma}(H)$$
has dense range. For this, choose some $f\in X_0$ and a sequence $(\lambda_n)_{n\in\mathbb{N}} \subseteq \Sigma_\theta$ with $\underset{n\to\infty}{\lim} \abs{\lambda_n} = \infty$ both arbitrary but fixed. Invoking Theorem \ref{th:sectoriality} once more we know that $(f_n)_{n\in\mathbb{N}}\subseteq X_0$ defined by $f_n \coloneqq \lambda_n \mathrm{R}_\gamma(\lambda_n) f - f $ is uniformly bounded in $L^p_{\sigma,\gamma}(H),$ thus since $L^p_{\sigma, \gamma}(H)$ is reflexive there exists a weakly convergent subsequence 
$$f_{n_k} \rightharpoonup f_\infty$$
for some limit $f_\infty \in L^p_{\sigma,\gamma}(H)$. If no confusion arises, we will always denote subsequences with the same indices as the original sequences in the following.

We are going to argue that actually $f_\infty = 0.$ Let $\varepsilon >0$ and choose any weakly convergent subsequence of $(f_n)_{n\in\mathbb{N}}.$ By Mazur's lemma there exists to any of the truncated sequences $(f_n)_{n\geq k}$ a convex combination
$$g_k \coloneqq \sum_{j=k}^{N_k}\alpha^k_j f_j, \quad \alpha^k_j \geq 0,\quad \sum_{j=k}^{N_k}\alpha^k_j = 1$$
with $$\underset{k\to\infty}{\lim}g_k \to f_\infty\quad \text{in } L^p_{\sigma}(H).$$
In particular there exists a subsequence $(g_{k_l})_{l\in\mathbb{N}}$ such that for $l\geq l_0$ for almost every $x \in H$
$$\abs{g_{k_l}(x) - f_\infty(x)} < \varepsilon.$$
The $L^p_\sigma$-convergence $(f_n)_{n\in\mathbb{N}} \to 0$ lets us drop from $(f_{k_l})_{l\in\mathbb{N}}$ to a further subsequence which converges almost everywhere to $0$ in $L^p_{\sigma}(H);$ let us denote this final subsequence by $(f_n)_{n\in\mathbb{N}}$ and the corresponding subsequence of $(g_{k_l})_{l\in\mathbb{N}}$ by $(g_k)_{k\in\mathbb{N}}$ again. After possibly a rearrangement of null sets it thus holds
$$\abs{g_k(x)} \leq \sum_{j=k}^{N_k} \alpha^k_j \abs{f_j(x)} < \varepsilon\sum_{j=k}^{N_k}\alpha^k_j = \varepsilon\quad\text{for almost every }x\in H,$$
thus
$$\abs{f_\infty}(x) \leq \abs{g_k(x)-f_\infty(x)}+\abs{g_k(x)} < 2 \varepsilon$$
for almost all $x\in H$ and every $\varepsilon>0.$ Since the limit of every (weakly) convergent sequence which has the property that every (weakly) convergent subsequence has another subsequence converging to $0$ is $0,$ this shows $f_\infty = 0.$

Summing up, we have seen $\lambda_n \mathrm{R}_\gamma(\lambda_n)f \rightharpoonup f$ in $L^p_{\sigma,\gamma}(H).$ This shows that $X_0$ is contained in the $L^p_\gamma$-closure of $\mathcal{R}(\mathrm{R}_\gamma(\lambda))$ with respect to the weak topology, which is the same as the closure with respect to the topology induced by the norm, i.e.
$$X_0\subseteq\overline{\mathcal{R}(\mathrm{R}_\gamma(\lambda))}^{\norm{\cdot}_{L^p_\gamma}}.$$
But $X_0$ is dense in $L^p_\gamma(H),$ which concludes the claim.

Lastly, we show that $\mathrm{R}_\gamma(\lambda)$ is injective using a standard argument (see \cite[Corollary 4.7]{EngelNagel00}). For this, fix any sequence $(\lambda_n)_{n\in\mathbb{N}} \subseteq \Sigma_\theta$ with $\abs{\lambda_n}\to\infty.$ From the resolvent estimate in Theorem \ref{th:sectoriality} and the resolvent equation, we obtain for any fixed $\mu \in \Sigma_\theta$
$$\underset{n\to\infty}{\lim} \norm{(\lambda_n\mathrm{R}_\gamma(\lambda_n)-\identity)\,\mathrm{R}_\gamma(\mu)} = 0.$$
Therefore, it follows that
\begin{equation}
\underset{n\to\infty}{\lim} \lambda_n \mathrm{R}_\gamma(\lambda_n)u = u \quad \text{for}\: u \in \mathcal{R}(\mathrm{R}_\gamma(\mu)).   
\end{equation}
Since this is a dense subspace of $L^p_{\sigma, \gamma}(H)$, we can conclude that this even holds for $u \in L^p_{\sigma, \gamma}(H)$.
Thus there exists a densely defined closed operator $(A,\mathcal{D}(A))$ such that $\rho(A) \subseteq \Sigma_\theta$ and $\mathrm{R}_\gamma(\lambda) = (\lambda-A)^{-1}$ for all $\lambda\in\Sigma_\theta$ \cite[III, Proposition 4.6]{EngelNagel00}. We call this operator the \emph{Stokes operator} $\mathcal{A}_{p,\gamma}$ on $L^p_{\sigma,\gamma}(H).$

\subsection{\texorpdfstring{$H^\infty$-calculus}{H-infinity-calculus}}
The aim of this section is to prove Theorem \ref{th:HinftyCalculus}. Our proof follows closely a line of argument in \cite{DHP01}. Recall the notation $(x_1,\dots,x_{n+1})^\mathrm{T} = (x,y)^\mathrm{T}\in H$ from Section \ref{sec:pointwiseEstimates}. For any $p \in (1,\infty)$ let $\gamma \in (-p-1,2p-1)$ and set $w(x,y) = y^\gamma,$ i.e. $\mathrm{d}w(x,y) = \dd x\:y^\gamma\mathrm{d}y.$ Furthermore recall the notation $\gamma^\prime = -\frac{\gamma}{p-1}$ from Section \ref{sec:WeightedSpaces} and, for any $h\in H^\infty_0,$
\begin{equation*}
\begin{aligned}
    k_{h,v}(x,y,\tilde{y}) &= \frac{1}{2\pi i} \int_\Gamma h(\lambda) r_v(x,y,\tilde{y},-\lambda) \dd \lambda, \quad x \in \R^n,\, y, \tilde{y} > 0\\
    (T_{h,v}f)(x,y) &= \int_0^\infty \int_{\R^n} k_{h,v}(x-\tilde{x}, y, \tilde{y}) f(\tilde{x}, \tilde{y}) \dd \tilde{x} \dd \tilde{y}, \quad x \in \R^n,\, y>0.
\end{aligned}
\end{equation*}
from Section \ref{sec:pointwiseEstimates}.

\subsubsection{Reduction step}
We begin by estimating
\begin{align*}
\norm{T_{h,v}f}^p_{L^p(H,\,\mathrm{d}w)} &= \int_0^\infty \int_{\R^n} \abs{\int_0^\infty \int_{\R^n} k_{h,v}(x-\tilde{x}, y, \tilde{y})f(\tilde{x}, \tilde{y}) \dd \tilde{x} \dd \tilde{y}}^p \dd x\:y^\gamma\mathrm{d}y\\
&\leq \int_0^\infty \int_{\R^n} \left(\int_0^\infty \abs{\int_{\R^n} k_{h,v}(x-\tilde{x}, y, \tilde{y})f(\tilde{x}, \tilde{y}) \dd \tilde{x}} \dd \tilde{y}\right)^p \dd x\:y^\gamma\mathrm{d}y\\
&= \int_0^\infty \int_{\R^n} \left(\int_0^\infty \abs{(k_{h,v}\ast f)(x, y, \tilde{y})} \dd \tilde{y}\right)^p \dd x\:y^\gamma\mathrm{d}y\\
&= \int_0^\infty \int_{\R^n} \norm{(k_{h,v}\ast f)(x, y, \cdot)}_1^p \dd x\:y^\gamma\mathrm{d}y\\
&\leq \int_0^\infty \left(\int_0^\infty \norm{(k_{h,v}\ast f)(\cdot, y, \tilde{y})}_p \dd \tilde{y}\right)^p\:y^\gamma\mathrm{d}y\\
&\leq \int_0^\infty \left(\int_0^\infty \norm{k_{h,v}(\cdot,y,\tilde{y})}_1 \norm{f(\cdot,\tilde{y})}_p \dd \tilde{y}\right)^p\:y^\gamma\mathrm{d}y\\
&\lesssim \norm{h}_\infty \int_0^\infty \left(\int_0^\infty \norm{k_1(\cdot,y,\tilde{y})}_1 \norm{f(\cdot,\tilde{y})}_p \dd \tilde{y}\right)^p\:y^\gamma\mathrm{d}y
\end{align*}
where we used Minkowski's integral inequality and then Young's convolution inequality and the first pointwise estimate \eqref{eq:pointwiseEstimate1} from section \ref{sec:pointwiseEstimates} at the end.
This shows that the question of boundedness of $T_{h,v}$ comes down to the boundedness of the auxiliary operator
$$(T_1f)(y) \coloneqq \int_0^\infty \norm{k_1(\cdot,y,\tilde{y})}_1 \norm{f(\cdot,\tilde{y})}_p \dd \tilde{y}, \quad y > 0$$
in $L^p((0,\infty),\,y^\gamma\mathrm{d}y).$ Employing now the second pointwise estimate \eqref{eq:pointwiseEstimate2}, we see that this operator is dominated by another auxiliary operator
$$(Tf)(y) \coloneqq \int_0^\infty \frac{\log(1+\frac{y}{\tilde{y}})}{y+\tilde{y}} \norm{f(\cdot,\tilde{y})}_p \dd \tilde{y}, \quad y > 0,$$
for which we will show boundedness in $L^p((0,\infty),\,y^\gamma\mathrm{d}y)$ to conclude our argument by an application of the dominated convergence theorem.

\begin{remark}
Note that immediately applying the weighted Hölder inequality to 
$$\int_0^\infty \left(\int_0^\infty \norm{k_1(\cdot,y,\tilde{y})}_1 \norm{f(\cdot,\tilde{y})}_p \dd \tilde{y}\right)^p\:y^\gamma\mathrm{d}y$$
does not succeed since the integral
$$\int_0^\infty \left(\int_0^\infty \Biggl(\frac{\log(1+\frac{y}{\tilde{y}})}{y+\tilde{y}}\Biggr)^{p^\prime} y^{-\frac{\gamma}{p-1}} \dd \tilde{y} \right)^{p/p^\prime} y^\gamma \dd y$$
is divergent.
\end{remark}

\subsubsection{\texorpdfstring{Boundedness of $T$}{Boundedness of T}}
We will show the estimates
\begin{equation}\label{eq:L_inftyEstimate}
\norm{Tf}_{L^\infty(y^\gamma\mathrm{d}y)} \lesssim \norm{f}_{L^{\infty}(y^\gamma\mathrm{d}y)}
\end{equation}
and
\begin{equation}\label{eq:weakL_pEstimate}
\norm{Tf}_{L^{p,\infty}(y^\gamma\mathrm{d}y)} \lesssim \norm{f}_{L^p(y^\gamma\mathrm{d}y)}, \quad p \in (1,\infty).
\end{equation}

First, estimate
\begin{align*}
\int_0^\infty \frac{\log(1+\frac{y}{\tilde{y}})}{y+\tilde{y}} \dd \tilde{y} &= -\int_0^\infty \frac{\log(1+s)}{\tilde{y}(s+1)}\left(-\frac{\tilde{y}}{s}\right) \dd s\\
&= \int_0^\infty \frac{\log(1+s)}{s(s+1)} \dd s\\
&= \int_0^1 \frac{\log(1+s)}{s(s+1)} \dd s + \int_1^\infty \frac{\log(1+s)}{s(s+1)} \dd s,
\end{align*}
where we substituted $s = \frac{y}{\tilde{y}}, \;\frac{\mathrm{d} \tilde{y}}{\mathrm{d} s} = -\frac{y}{s^2} = -\frac{\tilde{y}}{s}.$ The second integral is clearly finite. For the first, note that $\log(1+x) = x + O(x^2)$ for $x \in (-1,1)$ so that $\frac{O(s^2)}{s} \frac{1}{s+1}$ is bounded on $(0,1).$ To sum up, we have $\frac{\log(1+\frac{y}{\tilde{y}})}{y+\tilde{y}} \in L^1(0,\infty; \mathrm{d} \tilde{y})$ which proves \eqref{eq:L_inftyEstimate} (note that $L^\infty(\mathrm{d}y) = L^\infty(y^\gamma\mathrm{d}y)$ since both measures share the same nullsets). Note furthermore that the calculation is valid for any $\gamma \in \R.$

\vspace{10pt}
We turn our attention to \eqref{eq:weakL_pEstimate}.

For any $f \in L^p(y^\gamma\mathrm{d}y),$ we first derive a pointwise estimate on $T$:
\begin{align*}
\abs{(Tf)(y)} &= \int_0^\infty \frac{\log(1+\frac{y}{\tilde{y}})}{y+\tilde{y}} \norm{f(\cdot,\tilde{y})}_p \dd \tilde{y}\\
&\leq \left(\int_0^\infty \left(\frac{\log(1+\frac{y}{\tilde{y}})}{y+\tilde{y}}\right)^{p^\prime} (\tilde{y})^{\gamma^\prime} \dd \tilde{y}\right)^{1/p^\prime} \quad\!\! \norm{\norm{f(\cdot,\tilde{y})}_p}_{L^p((\tilde{y})^\gamma d\tilde{y})}\\
&= -\left(\int_0^\infty \left(\frac{\log(1+s)\,s}{y\,(1+s)}\right)^{p^\prime} \left(\frac{y}{s}\right)^{\gamma^\prime} \left(-\frac{y}{s^2}\right)\dd s\right)^{1/p^\prime} \norm{f(x,\tilde{y})}_{L^p(H,\mathrm{d}w)}\\
&= y^{\gamma^\prime/p^\prime}\,y^{-1}\,y^{1/p^\prime}\left(\int_0^\infty \left(\frac{\log(1+s)}{1+s}\right)^{p^\prime} \frac{1}{s^{2-p^\prime+\gamma^\prime}}\dd s\right)^{1/p^\prime} \norm{f(x,\tilde{y})}_{L^p(H,\mathrm{d}w)}\\
&\coloneqq y^{-\frac{1+\gamma}{p}}\left(\int_0^\infty \left(\frac{\log(1+s)}{1+s}\right)^{p^\prime} \frac{1}{s^{2-p^\prime+\gamma^\prime}}\dd s\right)^{1/p^\prime}\norm{f(x,\tilde{y})}_{L^p(H,\mathrm{d}w)},
\end{align*}
where we first applied the weighted Hölder inequality and then substituted $s = \frac{y}{\tilde{y}}, \;\frac{\mathrm{d}\tilde{y}}{\mathrm{d} s} = -\frac{y}{s^2} = -\frac{\tilde{y}}{s}.$

We study convergence of the integral term
$$R(p,\gamma)\coloneqq\left(\int_0^\infty \left(\frac{\log(1+s)}{1+s}\right)^{p^\prime} \frac{1}{s^{2-p^\prime+\gamma^\prime}}\dd s\right)^{1/p^\prime}$$
for given $1<p<\infty$ in dependence on $\gamma.$ For this, it is natural to split the integral at $s=1$ and consider both terms separately. It holds
\begin{align*}
\int_0^1 \left(\frac{\log(1+s)}{1+s}\right)^{p^\prime} \frac{1}{s^{2-p^\prime+\gamma^\prime}}\dd s &\leq \int_0^1 \log(1+s)^{p^\prime} \frac{1}{s^{2-p^\prime+\gamma^\prime}}\dd s\\
&= \int_0^1 \frac{1}{s^{2-2p^\prime+\gamma^\prime}}\dd s + \int_0^1 O(s^2)^{p^\prime}\: \frac{1}{s^{2-p^\prime+\gamma^\prime}}\dd s.
\end{align*}
Clearly, the second summand always converges if the first one does. This yields the condition \begin{equation}
2-2p^\prime+\gamma^\prime > 1 \,\Leftrightarrow\, \gamma^\prime < 2p^\prime -1.
\end{equation}
For the remaining integral we have
\begin{align*}
\int_1^\infty \left(\frac{\log(1+s)}{1+s}\right)^{p^\prime} \frac{1}{s^{2-p^\prime+\gamma^\prime}}\dd s &= \int_1^\infty \left(\frac{\log(1+s)}{(1+s)^\varepsilon}\right)^{p^\prime} \frac{1}{s^{2-p^\prime+\gamma^\prime}(1+s)^{p^\prime(1-\varepsilon)}}\dd s
\end{align*}
where $\varepsilon > 0$ is arbitrary, which converges iff
$2-p^\prime+\gamma^\prime+p^\prime(1-\varepsilon) > 1 \,\Leftrightarrow\, \gamma^\prime > -1 + p^\prime \varepsilon.$ Since this is true for any $\varepsilon>0,$ we get the condition \begin{equation}
\gamma^\prime > -1.
\end{equation}

To summarize, $R(p,\gamma)$ converges for any $\gamma\in\R$ such that $-1<-\frac{\gamma}{p-1}<2p^\prime-1,$ which one easily computes equivalently to $-1-p<\gamma<p-1.$ Under these conditions the pointwise estimate
$$\abs{(Tf)(y)}\lesssim y^{-\frac{1+\gamma}{y}} \norm{f}_{L^p(H,\mathrm{d}w)},\quad y>0,$$
holds. We will now employ this estimate to show \eqref{eq:weakL_pEstimate}.

For any fixed $r>0$ the relation $r < \abs{(Tf)(y)}$ implies $r \leq C y^{-\frac{1+\gamma}{y}} \norm{f}_{L^p(H,\mathrm{d}w)}$ and rearranging gives $y<\left(\frac{\norm{f}_{L^p(H,\mathrm{d}w)}}{r}\right)^{\frac{p}{1+\gamma}} \eqqcolon b.$ Thus
\begin{align*}
\norm{Tf}_{L^{p,\infty}(y^\gamma\mathrm{d}y)} &= \sup_{r>0} r\,\Bigl(\int_{\abs{Tf} > r} w \dd y \Bigl)^{\frac{1}{p}}\\
&\leq \sup_{r>0} r\,\Bigl(\int_0^b w \dd y \Bigl)^{\frac{1}{p}}\\
&= \sup_{r>0} r \frac{1}{1+\gamma} \frac{\norm{f}_{L^p(y^\gamma\mathrm{d}y)}}{r}\\
&= \frac{1}{1+\gamma} \norm{f}_{L^p(y^\gamma\mathrm{d}y)}
\end{align*}
which proves \eqref{eq:weakL_pEstimate}.

For any fixed $p_0\in(1,\infty),$ an application of the Marcinkiewicz interpolation theorem, see \cite[Theorem 2.2.3]{HNVW16}, now shows that for any $-1-p_0<\gamma<p_0-1$ the operator $T$ is bounded on $L^p((0,\infty),\,y^\gamma\mathrm{d}y), \quad p\in(p_0,\infty].$ Thus $T$ is in fact bounded on $L^p((0,\infty),\,y^\gamma\mathrm{d}y)$ whenever $-1-p<\gamma<p-1$ and hence the Stokes operator $\mathcal{A}_{p,\gamma}$ has a bounded $H^\infty$-calculus in the corresponding space $L^p_{\sigma, \gamma}(H)$. By duality (see \cite[Proposition 10.2.20]{HNVW17}), this even holds for $-1-p<\gamma<2p-1.$ This concludes the proof.

\begin{remark}
In view of \cite[Example 5.2]{LindemulderVeraar20}, this result is in the nature of best possible.
\end{remark}

\subsection*{Acknowledgment} The author would like to thank Amru Hussein for his advice and support throughout the writing of this article and Paul Beckermann for fruitful discussions regarding Section \ref{sec:StokesOperator}.

The author has been  supported by the Deutsche Forschungsgemeinschaft (DFG)  through the projects  508634462 and 536630683.

\bibliographystyle{plain}
\bibliography{Ref}

\end{document}